\documentclass[12pt]{article}

\textwidth=150truemm \textheight=220truemm \oddsidemargin=1truecm
\evensidemargin=1truecm
\usepackage{amsmath}
\usepackage{amsmath,amsfonts,amssymb,amsthm,graphics,amscd}
\usepackage[lmargin=3cm, rmargin=3cm,tmargin=2.5cm,bmargin=2.5cm]{geometry}

\setcounter{MaxMatrixCols}{10}

\allowdisplaybreaks[4]
\newtheorem{theorem}{Theorem}
\theoremstyle{plain}

\newtheorem{lemma}{Lemma}
\newtheorem{Hypothesis}{Hypothesis}
\newtheorem{notation}{Notation}
\newtheorem{Inverse problem}{Inverse Problem}

\newtheorem{remark}{Remark}

\numberwithin{equation}{section}
\begin{document}
\title
{Spectral properties of
Sturm-Liouville operators on infinite metric graphs}

\author{Yihan Liu, Jun Yan* and Jia Zhao }
\maketitle

\setcounter{page}{1}

\begin{abstract}
This paper mainly deals with the Sturm-Liouville operator
\begin{equation*}
\mathbf{H}=\frac{1}{w(x)}\left( -\frac{\mathrm{d}}{\mathrm{d}x}p(x)\frac{
\mathrm{d}}{\mathrm{d}x}+q(x)\right) ,\text{ }x\in \Gamma
\end{equation*}
acting in $L_{w}^{2}\left( \Gamma \right) ,$ where $\Gamma $ is a metric
graph. We establish a relationship between the bottom of the spectrum and
the positive solutions of quantum graphs, which is a generalization of the
classical Allegretto-Piepenbrink theorem. Moreover, we prove the
Persson-type theorem, which characterizes the infimum of the essential
spectrum.
\end{abstract}

\renewcommand{\thefootnote}{}
\footnotetext{\hspace{-7pt} {\em 2020 Mathematics Subject
Classification\/}:Primary 34B45; Secondary 34L05, 81Q35
\baselineskip=18pt\newline\indent {\em Key words and phrases\/}:
Sturm-Liouville operators, metric graphs, spectrum.}

\section{Introduction}

The main object of the present paper is the self-adjoint Sturm-Liouville
operator in the Hilbert space $L_{w}^{2}(\Gamma )$ associated with the
differential expression
\begin{equation}
lf(x)=\frac{1}{w(x)}\left( -\left( p(x)f^{\prime }(x)\right) ^{\prime
}+q(x)f(x)\right) ,\text{ }x\in \Gamma ,  \label{Schrodinger}
\end{equation}%
where $\Gamma $ is a metric graph and the matching conditions imposed at
inner vertices are the \emph{Kirchhoff conditions}. Throughout this paper,
we always assume that $1/p,$ $q,$ $w\in L_{loc}^{1}\left( \Gamma \right) .$

In the last two decades, differential operators on metric graphs have
attracted huge attentions due to numerous applications in mathematical
physics and engineering (\cite{6,8,9,10,kuch} and references therein).
Particularly, there has been an increasing interest in the spectral theory
of Sturm-Liouville operators on metric graphs. (see \cite{spec,8,9,kuch,S1}
and references therein). From the mathematical point of view, such a system
is interesting because it exhibits a mixed dimensionality being locally
one-dimensional but globally multi-dimensional of many different types.

Consider the following form in $L_{w}^{2}(\Gamma )$
\begin{equation}
\mathbf{t}_{0}^{0}\left[ f\right] =\int\nolimits_{\Gamma }p(x)\left\vert
f^{\prime }(x)\right\vert ^{2}\mathrm{d}x,\text{ }\mathbf{q}\left[ f\right]
=\int\nolimits_{\Gamma }q\left\vert f(x)\right\vert ^{2}\mathrm{d}x
\end{equation}%
defined on the respective domains
\begin{equation*}
\text{dom}(\mathbf{t}_{0}^{0})=\{f\in H_{c}^{1}(\Gamma ;p,w):f|_{\partial
\Gamma }=0\}
\end{equation*}%
and%
\begin{equation*}
\text{dom}(\mathbf{q})=\{f\in L_{w}^{2}(\Gamma ):\left\vert \mathbf{q}\left[
f\right] \right\vert <\infty \}.
\end{equation*}%
Here $H_{c}^{1}(\Gamma ;p,w)$ denotes the subspace of $H^{1}(\Gamma ;p,w)$
with compact support, and%
\begin{equation*}
\begin{array}{ll}
H^{1}(\Gamma ;p,w):= & \{f\in L_{w}^{2}(\Gamma ):\int\nolimits_{\Gamma
}p(x)\left\vert f^{\prime }(x)\right\vert ^{2}\mathrm{d}x<\infty , \\
& f\text{ is continuous and edgewise absolutely continuous}\}.%
\end{array}%
\end{equation*}%
Let us introduce the form $\mathbf{t}_{q}^{0}$ as a form sum of the two
forms $\mathbf{t}_{0}^{0}$ and $\mathbf{q:}$%
\begin{equation*}
\mathbf{t}_{q}^{0}\left[ f\right] =\mathbf{t}_{0}^{0}\left[ f\right] +%
\mathbf{q}\left[ f\right] ,\text{ dom}\left( \mathbf{t}_{q}^{0}\right) =%
\text{dom}(\mathbf{t}_{0}^{0})\cap \text{dom}(\mathbf{q}).
\end{equation*}%
If the form $\mathbf{t}_{q}^{0}$ is lower semibounded, then it is closable.
Denote $\mathbf{H}_{\mathbf{t}_{q}}$ the self-adjoint operator associated
with $\mathbf{t}_{q}=\overline{\mathbf{t}_{q}^{0}}.$

The purpose of this paper is to develop Allegretto-Piepenbrink-type theorem
(Theorem \ref{AP}) and Persson-type theorem (Theorem \ref{persson}) for the
operator $\mathbf{H}_{\mathbf{t}_{q}}$, which are classical topics and we
refer to the papers cited in this paragraph for historical remarks (\cite%
{11,12,13,22,singu,deco,akdu}). More precisely, we establish a
relationship between the bottom of the spectrum and the positive solutions
of quantum graphs, which is a generalization of the classical
Allegretto-Piepenbrink theorem. Moreover, we prove the Persson-type theorem,
which characterizes the infimum of the essential spectrum. It should be
mentioned that the quantities $\inf \sigma \left( \mathbf{H}_{\mathbf{t}%
_{q}}\right) $ and $\inf \sigma _{ess}\left( \mathbf{H}_{\mathbf{t}%
_{q}}\right) $ are of fundamental importance for several reasons. For
example, in the theory of parabolic equations the quantity $\inf \sigma
\left( \mathbf{H}_{\mathbf{t}_{q}}\right) $ may give the speed of
convergence of the system towards equilibrium. Moreover, $\inf \sigma
_{ess}\left( \mathbf{H}_{\text{D}}\right) =+\infty $ holds if and only if $%
\mathbf{H}_{\mathbf{t}_{q}}$ has purely discrete spectrum.

In the last decades, Allegretto-Piepenbrink-type theorem has been
investigated for strongly local Dirichlet forms \cite{22}, positive Schr\"{o}%
dinger operators on general weighted graphs \cite{critical graph} and Schr%
\"{o}dinger operators on $%
\mathbb{R}
^{d}$ with singular potentials \cite{singu}. Here, we generalize this
theorem to our context and provide a simple proof along the lines of $\left(
\cite{hartman,positive}\right) $.

Recently, a Persson-type theorem for the Schr\"{o}dinger operators ($p=w=1$)
on infinite metric graphs has been given in \cite{akdu} by Akduman and
Pankov. Moreover, Lenz and Stollmann present a Persson-type theorem valid
for all regular Dirichlet forms satisfying a spatial local compactness
condition \cite{deco}; they also discuss a generalization to certain Schr\"{o%
}dinger type operators, where the negative part of measure perturbations has
to fulfill some Kato condition. In this direction, we present concrete
conditions on the coefficients $1/p,$ $q,$ $w$ and the lengths of the graph
edges, which guarantee the validity of the Persson-type theorem for the
Sturm-Liouville operator on infinite metric graphs.

Let us now finish the introduction by describing the content of the article.
In Sect. 2, we review necessary notions and facts on infinite metric graphs.
Section 3 and Section 4 are devoted to investigating the
Allegretto-Piepenbrink-type theorem and the Persson-type theorem for the
Sturm-Liouville operator $\mathbf{H}_{\mathbf{t}_{q}}$ on infinite metric
graphs.

\section{Preliminaries on metric graphs}

In what follows, $\Gamma =(\mathcal{E},\mathcal{V})$ will be an undirected
graph with countably infinite sets of vertices $\mathcal{V}$ and edges $%
\mathcal{E}$. A graph is called connected if for any two vertices there is a
path connecting them. For every vertex $v\in \mathcal{V}$, we denote the set
of edges incident to the vertex $v$ by $\mathcal{E}_{v}\ $and%
\begin{equation*}
\deg _{\Gamma }\left( v\right) :=\#\left\{ e:e\in \mathcal{E}_{v}\right\}
\end{equation*}%
is called \textit{the} \textit{degree} of a vertex $v\in \mathcal{V}$.
Moreover, the boundary of $\Gamma $ is defined as
\begin{equation*}
\partial \Gamma =\left\{ v\in \mathcal{V}\left( \Gamma \right) :\deg
_{\Gamma }\left( v\right) =1\right\} .
\end{equation*}%
The graph $\Gamma $ is said to be a \textit{metric graph} if each edge $e$
is assigned a positive length $|e|\in (0,\infty )$. This enables us to equip
$\Gamma $ with a topology and metric. By assigning each edge a direction and
calling one of its vertices the initial vertex $o\left( e\right) $ and the
other one the terminal vertex $t\left( e\right) $, every edge $e\in \mathcal{%
E}\left( \Gamma \right) $ can be identified with a copy of the interval $%
I_{e}=[0,|e|]$. The distance $\rho (x,y)$ between two points $x$ and $y$ in $%
\Gamma $ is defined as the length of the shortest path that connects these
points. Since the graph is connected, the distance is well defined. In
addition, there is a natural measure, d$x$, on $\Gamma $ which coincides
with the Lebesgue measure on each edge. In particular, integration over $%
\Gamma $ makes sense. For further details we refer to, e.g., \cite[Chapter
1.3]{kuch}.

Throughout this paper, we shall always make the following assumptions.

\begin{Hypothesis}
The graph $\Gamma $ is connected and locally finite $(\deg _{\Gamma }\left(
v\right) <\infty $ for every $v\in \mathcal{V})$.
\end{Hypothesis}

\begin{Hypothesis}
\emph{(Finite ball condition)}. For any positive number $r$ and any vertex $%
v $ there is only a finite number of vertices $w$ at a distance smaller than
$r $ from $v$.
\end{Hypothesis}

\begin{Hypothesis}
\label{shangjie}There is a finite upper bound for lengths of graph edges:%
\begin{equation*}
\sup\limits_{e\in \mathcal{E}\left( \Gamma \right) }\left\vert e\right\vert
=d^{\ast }<\infty .
\end{equation*}
\end{Hypothesis}

In fact, Hypothesis \ref{shangjie} is imposed for a convenience only. We
denote by $L_{w}^{2}(\Gamma )$ the space of all complex-valued functions
which are weighted square-integrable on $\Gamma $ with respect to the
measure d$x$. More explicitly, this space consists of all measurable
functions $f$ such that $f_{e}\in $ $L_{w}^{2}(e)$ for all $e\in \mathcal{E}%
(\Gamma )$ and
\begin{equation*}
\left\Vert f\right\Vert _{L_{w}^{2}(\Gamma )}^{2}=\sum_{e\in \mathcal{E}%
(\Gamma )}\left\Vert f\right\Vert _{L_{w}^{2}(e)}^{2}<\infty .
\end{equation*}%
We also need the standard space $L_{loc}^{1}\left( \Gamma \right) $ with
respect to the measure d$x$. It consists of all functions which are
absolutely integrable on every edge.

In this paper, we impose at each inner vertex the following conditions:%
\begin{equation}
\left\{
\begin{array}{c}
f\text{ is continuous at }v, \\
\sum\limits_{e\in \mathcal{E}_{v}}\left( pf^{\prime }\right) _{e}\left(
v\right) =0,%
\end{array}%
\right.  \label{kirchhoff}
\end{equation}%
which is the so-called Kirchhoff vertex conditions. Here $\left( pf^{\prime
}\right) _{e}\left( v\right) $ is the quasi-derivative in the outgoing
direction at the vertex $v$, $f_{e}$ denotes the restriction of a function $%
f $ onto the edge $e$.

\begin{notation}
In this paper, $%
\mathbb{N}
$ denotes the set of positive integers and $%
\mathbb{N}
_{0}$ denotes the set of nonnegative integers.
\end{notation}

\section{Allegretto-Piepenbrink-type theorem on metric graphs}

In this section, we establish the Allegretto-Piepenbrink-type theorem for
the operator $\mathbf{H}_{\mathbf{t}_{q}}$. It should be mentioned that we
allow multigraphs, that is, we allow multiple edges and loops. We shall say
that a function is a solution of the equation $ly=\lambda y,$ $\lambda \in
\mathbb{C}
$ if it satisfies the equation on each edge $e\in \mathcal{E}\left( \Gamma
\right) $ and satisfies the Kirchhoff conditions at inner vertices.

\begin{theorem}[Allegretto-Piepenbrink-type theorem]
\label{AP}For any $\lambda \in \mathbb{R}$,\newline
$(1)$ if there exists a positive solution $y>0$ on $\Gamma $ for $ly=\lambda
y$, then $\inf \sigma (\mathbf{H}_{\mathbf{t}_{q}})\geq \lambda ;$

$\hspace{-4mm}(2)$ if $\inf \sigma (\mathbf{H}_{\mathbf{t}_{q}})>\lambda $,
then there exists a positive solution $y>0$ on $\Gamma $ for $ly=\lambda y$.
\end{theorem}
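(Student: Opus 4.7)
The strategy is to prove the two directions separately. Direction $(1)$ follows from a \emph{ground state transformation} of Allegretto--Piepenbrink--Agmon type, while direction $(2)$ is proved by exhausting $\Gamma$ with compact finite subgraphs, solving Dirichlet problems on each, and extracting a limit via a Harnack-type estimate.

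For direction $(1)$, by density and the definition $\mathbf{t}_q=\overline{\mathbf{t}_q^0}$ it is enough to prove $\mathbf{t}_q^0[\phi]\geq \lambda\|\phi\|_{L_w^2(\Gamma)}^2$ for every real $\phi\in\text{dom}(\mathbf{t}_q^0)$ of compact support. Since $y>0$ is continuous and locally bounded away from $0$, $\psi:=\phi/y$ lies in the same class. A direct computation on each edge yields
\begin{equation*}
p|\phi'|^2 = py^2|\psi'|^2 + py'\bigl(y|\psi|^2\bigr)'.
\end{equation*}
Integrating the last term by parts edgewise and substituting $-(py')'+qy=\lambda wy$ in the resulting interior integrals produces $-q|\phi|^2+\lambda w|\phi|^2$ plus endpoint terms. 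Summing over all edges, the endpoint contributions at every interior vertex $v$ collapse into $y(v)|\psi(v)|^2\sum_{e\in\mathcal{E}_v}(py')_e(v)=0$ by the Kirchhoff condition for $y$, while those on $\partial\Gamma$ and on the boundary of $\operatorname{supp}(\phi)$ vanish because $\phi=0$ there. The resulting clean identity
\begin{equation*}
\mathbf{t}_q^0[\phi]-\lambda\|\phi\|_{L_w^2(\Gamma)}^2 = \int_\Gamma py^2|\psi'|^2\,\mathrm{d}x\geq 0
\end{equation*}
yields $\inf\sigma(\mathbf{H}_{\mathbf{t}_q})\geq\lambda$ by the variational characterisation of the spectral bottom.

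For direction $(2)$, fix an interior vertex $o$ and an exhaustion $\Gamma_1\subset\Gamma_2\subset\cdots$ of $\Gamma$ by connected finite compact subgraphs with $o\in\Gamma_1$. Let $\mathbf{H}_n$ be the self-adjoint operator on $L_w^2(\Gamma_n)$ obtained from $\mathbf{t}_q^0$ with domain restricted to functions supported in $\Gamma_n$ and vanishing on $\partial\Gamma_n$. Form monotonicity gives $\inf\sigma(\mathbf{H}_n)\geq\inf\sigma(\mathbf{H}_{\mathbf{t}_q})>\lambda$, so $(\mathbf{H}_n-\lambda)^{-1}$ is well defined and positivity preserving. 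Pick nonnegative $g_n\not\equiv 0$ with $\operatorname{supp}(g_n)\subset\Gamma_n\setminus\Gamma_{n-1}$, set $u_n:=(\mathbf{H}_n-\lambda)^{-1}g_n$, and renormalise so that $u_n(o)=1$. Then $u_n>0$ on the interior of $\Gamma_n$, fulfills the Kirchhoff matching at every interior vertex of $\Gamma_n$, and satisfies $(l-\lambda)u_n=0$ classically outside $\operatorname{supp}(g_n)$. A local Harnack inequality for positive solutions of $ly=\lambda y$, combining the one-dimensional Harnack estimate on each edge with the flux balance at vertices, yields, on any fixed compact subgraph $K\ni o$ and for $n$ large enough that $K\cap\operatorname{supp}(g_n)=\emptyset$, a two-sided bound $C_K^{-1}\leq u_n\leq C_K$. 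The edgewise integral form of the equation then supplies uniform bounds on the quasi-derivatives $pu_n'$, and a diagonal Arzel\`a--Ascoli argument produces a subsequential limit $y\geq 0$ with $y(o)=1$: $C^0$-convergence on edges together with convergence of the quasi-derivatives at vertices is enough to push both the Kirchhoff condition and the equation $ly=\lambda y$ into the limit, and strict positivity $y>0$ on all of $\Gamma$ then follows once more from Harnack.

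The principal obstacle is the metric-graph Harnack inequality itself. On a single edge it is standard Sturm--Liouville theory, but across a vertex one has to quantitatively propagate a positive pointwise value through the Kirchhoff matching; this reduces to a comparison on each finite star subgraph whose constants can be chosen locally uniform using the finite-ball hypothesis, the uniform upper bound on edge lengths from Hypothesis~\ref{shangjie}, and local finiteness. Once the Harnack estimate is secured, the remaining compactness and passage-to-the-limit in direction $(2)$ is a routine diagonal argument.
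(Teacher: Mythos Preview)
Your proof is correct and follows essentially the same strategy as the paper's. Part~(1) is identical: both use the ground state substitution $\phi=y\psi$ and integrate by parts, with the Kirchhoff condition killing the vertex terms. Part~(2) also matches in architecture---exhaustion by compact subgraphs, form monotonicity to get $\inf\sigma(\mathbf{H}_n)>\lambda$, a positive solution $u_n$ on each $\Gamma_n$ normalised at $o$, Harnack for uniform two-sided bounds, equicontinuity of $u_n$ and $pu_n'$ via the equation, then Arzel\`a--Ascoli plus diagonalisation.

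The only substantive differences are in how two ingredients are sourced. For the positive solution on each $\Gamma_n$ the paper simply cites Pokornyi--Pryadiev, whereas you build $u_n=(\mathbf{H}_n-\lambda)^{-1}g_n$ with $g_n\geq 0$ supported in the outer shell; this is the classical resolvent construction (as in Pinsky's book, which the paper in fact lists among its references) and is more self-contained, though you then need the resolvent to be positivity \emph{improving}, not merely preserving, to guarantee $u_n(o)>0$ before normalising---this follows from connectedness and a strong maximum principle, and is worth making explicit. For Harnack, the paper invokes Gesztesy--Zhao on each edge and propagates through vertices via Kirchhoff, while you correctly identify this as the crux and sketch the same edge-plus-star argument. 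Neither difference changes the logic; your version trades black-box citations for a bit more hands-on construction.
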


\begin{proof}
(1) Let $y$ be a positive solution of $ly=\lambda y.$ Then $\forall \eta \in
$\textrm{{\ $\mathrm{dom}$}}$\mathrm{(\mathbf{t}_{q}^{0})}$, denote $g(x)=%
\frac{\eta (x)}{y(x)}$, and thus we have $g\in $\textrm{{\ $\mathrm{dom}$}}$%
\mathrm{(\mathbf{t}_{q}^{0})}$. Note that
\begin{eqnarray*}
\mathbf{t}_{q}^{0}\left[ \eta \right] &=&\int\nolimits_{\Gamma
}p(x)\left\vert \eta ^{\prime }(x)\right\vert ^{2}+q(x)\left\vert \eta
(x)\right\vert ^{2}\mathrm{d}x, \\
&=&\int_{\Gamma }[p|g^{\prime }y|^{2}+p|gy^{\prime }|^{2}+pg^{\prime }y%
\overline{g}\overline{y}^{\prime }+p\overline{g}^{\prime }\overline{y}%
gy^{\prime }]+q(x)\left\vert \eta (x)\right\vert ^{2}\mathrm{d}x,
\end{eqnarray*}%
and%
\begin{eqnarray*}
\int_{\Gamma }p|g|^{2}|y^{\prime }|^{2}\mathrm{d}x &=&\left.
p|g|^{2}y^{\prime }\overline{y}\right\vert _{\partial \Gamma }-\int_{\Gamma
}(p|g|^{2}y^{\prime })^{\prime }\overline{y}\mathrm{d}x \\
&=&-\int_{\Gamma }(py^{\prime })^{\prime }|g|^{2}\overline{y}\mathrm{d}%
x-\int_{\Gamma }py^{\prime }\overline{y}(g^{\prime }\overline{g}+\overline{g}%
^{\prime }g)\mathrm{d}x.
\end{eqnarray*}%
Therefore, $\forall \eta \in $\textrm{{\ $\mathrm{dom}$}}$\mathrm{(\mathbf{t}%
_{q}^{0})}$,%
\begin{equation*}
\mathbf{t}_{q}^{0}\left[ \eta \right] =\int_{\Gamma }\left( p|g^{\prime
}y|^{2}+\lambda w|g|^{2}|y|^{2}\right) \mathrm{d}x\geq \lambda \int_{\Gamma
}w|\eta |^{2}\mathrm{d}x.
\end{equation*}%
This shows that the lower bound of the form $\mathbf{t}_{q}$ and thus the
operator $\mathbf{H}_{\mathbf{t}_{q}}$ is not less than $\lambda .$ \newline
(2) Assume that $\inf \sigma (\mathbf{H}_{\mathbf{t}_{q}})>\lambda $. Let $%
\Gamma ^{\prime }$ $\subset \Gamma $ be any finite compact subgraph obtained
by cutting through the interior of edges$.$ Denote
\begin{equation}
\mathbf{t}_{q,\Gamma ^{\prime }}^{0}\left[ f\right] :=\int\nolimits_{\Gamma
^{\prime }}p(x)\left\vert f^{\prime }(x)\right\vert ^{2}+q(x)\left\vert
f(x)\right\vert ^{2}\mathrm{d}x,
\end{equation}%
and%
\begin{equation*}
\text{dom}(\mathbf{t}_{q,\Gamma ^{\prime }}^{0}):=\{f\in H^{1}(\Gamma
^{\prime };p,w):f|_{\partial \Gamma ^{\prime }}=0,\text{ }\mathbf{t}%
_{q,\Gamma ^{\prime }}^{0}\left[ f\right] <\infty \}.
\end{equation*}%
Note that dom$(\mathbf{t}_{q,\Gamma ^{\prime }}^{0})\subset $ dom$(\mathbf{t}%
_{q}^{0})$ in the sense that every function in dom$(\mathbf{t}_{q,\Gamma
^{\prime }}^{0})$ can be extended to be in dom$(\mathbf{t}_{q}^{0})$ by
setting it zero on remaining edges. Thus the form $\mathbf{t}_{q,\Gamma
^{\prime }}^{0}$ is lower semibounded and then is closable. According to the
representation theorem, it is standard to show that the self-adjoint
operator associated with the closure $\mathbf{t}_{q,\Gamma ^{\prime }}=%
\overline{\mathbf{t}_{q,\Gamma ^{\prime }}^{0}}$ is the Dirichlet operator $%
\mathbf{H}_{\text{D}}^{\Gamma ^{\prime }}$ in $L_{w}^{2}(\Gamma ^{\prime }),$
which directly yields that%
\begin{equation}
\inf \sigma \left( \mathbf{H}_{\text{D}}^{\Gamma ^{\prime }}\right) >\lambda
.  \label{subda}
\end{equation}

Fix a vertex $o\in \Gamma .$ For any integer $n>0$, denote%
\begin{equation*}
\Gamma \left( o;nd^{\ast }\right) :=\left\{ x\in \Gamma \left\vert \rho
\left( o,x\right) \leq nd^{\ast }\right. \right\} .
\end{equation*}%
Then cutting through the interior of edges, we can create a subgraph $\Gamma
_{n}\ $such that $\Gamma \left( o;nd^{\ast }\right) \subset \Gamma
_{n}\subset \Gamma \left( o;\left( n+1\right) d^{\ast }\right) $. In view of
\cite[Theorem 4.1 and Theorem 5.2.2]{POKO} and (\ref{subda}), it is easy to
see that there exists a solution $u_{n}\left( x\right) $ of $ly=\lambda y$
that is positive on $\Gamma _{n}.$ Define $y_{n}\left( x\right) =\frac{%
u_{n}\left( x\right) }{u_{n}\left( o\right) }.$ Then it can be seen that
\begin{equation*}
ly_{n}=\lambda y_{n}\text{, }\inf_{x\in \Gamma _{n}}y_{n}\left( x\right) >0%
\text{ and }y_{n}\left( o\right) =1.
\end{equation*}

Next, we shall prove that $\left\{ y_{n}\right\} _{n=m}^{\infty }$ and $%
\left\{ py_{n}^{\prime }\right\} _{n=m}^{\infty }\ $are uniformly bounded
and equicontinuous on each $e\in \mathcal{E}\left( \Gamma _{m}\right) .$ By
using the Harnack inequality given by Gesteszy \cite{ges} and the Kirchhoff
conditions at inner vertices, it is easy to obtain that for each positive
integer $n,$ there exist\ positive constants $C_{m1}$ and $C_{m2}$
(depending only on $m$) such that%
\begin{equation}
C_{m2}\leq \inf_{_{\substack{ x\in \Gamma _{m}  \\ m<n}}}y_{n}\left(
x\right) \leq \sup_{\substack{ x\in \Gamma _{m}  \\ m<n}}y_{n}\left(
x\right) \leq C_{m1}.  \label{harnack}
\end{equation}%
This means that $\left\{ y_{n}\left( x\right) \right\} _{n=m}^{\infty }$ is
uniformly bounded on $\Gamma _{m}$. For each $e\in \mathcal{E}\left( \Gamma
_{m}\right) $ and $x_{1},x_{2}\in e,$%
\begin{equation*}
\left\vert y_{n}\left( x_{1}\right) -y_{n}\left( x_{2}\right) \right\vert
=\int_{x_{1}}^{x_{2}}\frac{1}{p}py_{n}^{\prime }\mathrm{d}t\leq \sup_{x\in
e}\left\vert py_{n}^{\prime }\right\vert \int_{x_{1}}^{x_{2}}\frac{1}{p}%
\mathrm{d}t,
\end{equation*}%
thus we can show that $\left\{ y_{n}\left( x\right) \right\} _{n=m}^{\infty
} $ is equicontinuous on each $e\in \mathcal{E}\left( \Gamma _{m}\right) \ $%
if $\left\{ py_{n}^{\prime }\left( x\right) \right\} _{n=m}^{\infty }$ is
uniformly bounded on $e\in \mathcal{E}\left( \Gamma _{m}\right) .$ Note that
for each $e\in \mathcal{E}\left( \Gamma _{m}\right) ,$ there exists a point $%
x_{e}\in e$ such that
\begin{equation}
py_{n}^{\prime }(x_{e})=\frac{\int_{e}\frac{1}{p}\cdot py_{n}^{\prime }%
\mathrm{d}t}{\int_{e}\frac{1}{p}\mathrm{d}t}\leq \frac{2\max\limits_{x\in
e}\left\vert y_{n}(x)\right\vert }{\int_{e}\frac{1}{p}\mathrm{d}t}\leq \frac{%
2C_{m1}}{\int_{e}\frac{1}{p}\mathrm{d}t}.  \label{999}
\end{equation}%
Since $ly_{n}=\lambda y_{n},$ for $x\in e,$ one has%
\begin{equation*}
|py_{n}^{\prime }(x)-py_{n}^{\prime }(x_{e})|\leq \int_{e}\left( \left\vert
q\right\vert +|\lambda |w\right) |y_{n}(t)|\mathrm{d}t
\end{equation*}%
and thus%
\begin{equation*}
|py_{n}^{\prime }(x)|\leq |py_{n}^{\prime }(x_{e})|+\int_{e}\left(
\left\vert q\right\vert +|\lambda |w\right) |y_{n}(t)|\mathrm{d}t.
\end{equation*}%
Then the uniform boundedness of $\left\{ py_{n}^{\prime }\left( x\right)
\right\} _{n=m}^{\infty }$ on each $e\in \mathcal{E}\left( \Gamma
_{m}\right) $ follows from $\left( \ref{999}\right) $ and the uniform
boundedness of $\left\{ y_{n}\left( x\right) \right\} _{n=m}^{\infty }$ on $%
\Gamma _{m}$. Moreover, for each $e\in \mathcal{E}\left( \Gamma _{m}\right) $
and $x_{1},x_{2}\in e,$%
\begin{equation*}
\left\vert py_{n}^{\prime }\left( x_{1}\right) -py_{n}^{\prime }\left(
x_{2}\right) \right\vert \leq \int_{x_{1}}^{x_{2}}\left( \left\vert
q\right\vert +|\lambda |w\right) |y_{n}(t)|\mathrm{d}t,
\end{equation*}%
which yields the equicontinuous of $\left\{ py_{n}^{\prime }\left( x\right)
\right\} _{n=m}^{\infty }$ on each $e\in \mathcal{E}\left( \Gamma
_{m}\right) .$

Then it follows from Arzela-Ascoli Theorem that there exists subsequence $%
\left\{ y_{n_{j}}\right\} \subset \left\{ y_{n}\right\} _{n=m}^{\infty }$
such that $\left\{ y_{n_{j}}\right\} $ and $\left\{ py_{n_{j}}^{\prime
}\right\} $ are uniformly convergent on each edge of $\Gamma _{m}$.

By diagonalization, we can extract a subsequence $\left\{ y_{n_{j,j}}\left(
x\right) \right\} $ and edgewise continuous functions $f$ and $g$ defined on
$\Gamma $ such that for all $\Gamma _{m}\subset \Gamma ,$
\begin{equation*}
\sup_{x\in \Gamma _{m}}\left\vert y_{n_{j,j}}-f\right\vert +\sup_{x\in
\Gamma _{m}}\left\vert py_{n_{j,j}}^{\prime }-g\right\vert \rightarrow 0,%
\text{ as }j\rightarrow \infty .
\end{equation*}%
Based the above considerations, now we aim to prove that the function $f$ is
a positive solution of the equation $ly=\lambda y$ on $\Gamma $. For each $%
e\in \mathcal{E}\left( \Gamma \right) ,$%
\begin{equation*}
y_{n_{j}}\left( x\right) -y_{n_{j}}\left( o\left( e\right) \right)
=\int_{o\left( e\right) }^{x}\frac{1}{p}\cdot py_{n_{j}}^{\prime }\mathrm{d}%
t.
\end{equation*}%
Then it follows from the dominated convergence theorem that%
\begin{equation*}
f\left( x\right) -f\left( o\left( e\right) \right) =\int_{o\left( e\right)
}^{x}\frac{1}{p}g\mathrm{d}t,
\end{equation*}%
which yields that $pf^{\prime }=g.$ Also,
\begin{equation*}
py_{n_{j,j}}^{\prime }\left( x\right) -py_{n_{j,j}}^{\prime }\left( o\left(
e\right) \right) =\int_{o\left( e\right) }^{x}\left( q-\lambda w\right)
y_{n_{j,j}}(t)\mathrm{d}t,
\end{equation*}%
which yields that $g^{\prime }=\left( q-\lambda w\right) f.$ Clearly, $f$
satisfies the Kirchhoff conditions at inner vertices. Moreover, it is
immediately seen from $\left( \ref{harnack}\right) $ that $f>0$ on $\Gamma .$
The proof is completed.
\end{proof}

Define the Dirichlet operator $\mathbf{H}_{\text{D}}\ $as follows:%
\begin{eqnarray}
\mathbf{H}_{\text{D}}f &=&lf,  \label{HD} \\
\text{dom}(\mathbf{H}_{\text{D}}) &=&\{f\in D_{\max }:\text{ }f|_{\partial
\Gamma }=0,\text{\ }f\text{ satisfies }(\ref{kirchhoff})\text{ at inner
vertices}\}.  \notag
\end{eqnarray}%
Here the maximal domain $D_{\max }$ contains functions $f\in
L_{w}^{2}(\Gamma )$ edgewise absolutely continuous and $lf\in
L_{w}^{2}(\Gamma )$. In addition, define the pre-minimal operator $\mathbf{H}%
^{0}y:=ly$ and
\begin{equation*}
\text{dom}(\mathbf{H}^{0}):=\{f\in \text{dom}(\mathbf{H}_{\text{D}}):f\text{
\ has compact support in }\Gamma \}.
\end{equation*}

\begin{remark}
\label{self}Assume that $1/p,\ w\in L_{loc}^{1}\left( \Gamma \right) ,$ $%
q\geq 0$, then $\mathbf{H}_{\text{\emph{D}}}$ is self-adjoint and coincides
with the operator $\mathbf{H}_{\mathbf{t}_{q}}$ associated with $\mathbf{t}%
_{q}$. In fact, in this case, the self-adjointness of $\mathbf{H}_{\text{%
\emph{D}}}$ follows from \cite[Remark 5.3]{gla}. Notice that if $f\in $ dom$(%
\mathbf{H}^{0})$, then $f\in $ dom$(\mathbf{t}_{q}^{0})$ with $\mathbf{t}%
_{q}^{0}\left[ f\right] =\left( \mathbf{H}^{0}f,f\right) .$ Moreover, for
every $u\in $ dom$\left( \mathbf{H}^{0}\right) $ and every $v\in $ dom$(%
\mathbf{t}_{q}^{0}),$ one has%
\begin{equation*}
\mathbf{t}_{q}\mathbf{[}u,v\mathbf{]=}\left( \mathbf{H}^{0}u,v\right) ,
\end{equation*}%
then it follows from \cite[Chapter 6, Corollary 2.4]{kato} that $\mathbf{H}%
^{0}\subset \mathbf{H}_{\mathbf{t}_{q}}.$ This implies that $\mathbf{H}_{%
\emph{D}}=\mathbf{H}_{\mathbf{t}_{q}}.$
\end{remark}

\section{Persson-type theorem on metric graphs}

In this section, we illustrate that under the following Hypothesis \ref{hyp
copy(1)}, the Persson-type theorem for $\mathbf{H}_{\mathbf{t}_{q}}$ can be
given, see Theorem \ref{persson}. Here and thereafter we use the following
notation $a_{+}$ $=\max \left\{ a,0\right\} $ and $a_{-}$ $=-\min \left\{
a,0\right\} $.

\begin{Hypothesis}
\label{hyp copy(1)}$(1)$ $1/p\in L^{\eta }\left( \Gamma \right) ,$ $\eta \in %
\left[ 1,+\infty \right] ,$ $q\in L_{loc}^{1}\left( \Gamma \right) ,$ $w\in
L_{loc}^{1}\left( \Gamma \right) ;$

$\left( 2\right) $ there exists a compact subgraph $\Gamma ^{\prime }\subset
\Gamma $ such that%
\begin{equation*}
C_{w}:=\underset{x\in \Gamma \backslash \Gamma ^{\prime }}{\text{\emph{ess}}%
\inf }w>0;
\end{equation*}

$(3)$ $\inf\limits_{e\in \mathcal{E}\left( \Gamma \right) }\left\vert
e\right\vert =d_{\ast }>0;$ $(4)$ $C_{q}:=\sup\limits_{e\in \mathcal{E}%
\left( \Gamma \right) }\int\nolimits_{e}q_{-}\mathrm{d}t<+\infty .$
\end{Hypothesis}

\begin{notation}
Fix a vertex $o\in \Gamma .$ Throughout this section, for any integer $n>0$,
let $\Gamma _{n}\subset \Gamma $ be the union of all edges $e$ such that
both endpoints of $e$ are at a distance at most $n$ from $o$.
\end{notation}

To give the Persson-type theorem, let us introduce the form $\mathbf{t}%
_{q,n}^{0}$ as a form sum of the two forms $\mathbf{t}_{0,n}^{0}$ and $%
\mathbf{q}_{n}\mathbf{:}$%
\begin{equation*}
\mathbf{t}_{q,n}^{0}\left[ f\right] =\mathbf{t}_{0,n}^{0}\left[ f\right] +%
\mathbf{q}_{n}\left[ f\right] ,\text{ dom}\left( \mathbf{t}_{q,n}^{0}\right)
=\text{dom}(\mathbf{t}_{0,n}^{0})\cap \text{dom}(\mathbf{q}_{n}),
\end{equation*}%
where
\begin{eqnarray*}
\mathbf{t}_{0,n}^{0}\left[ f\right] &=&\int\nolimits_{\Gamma \backslash
\Gamma _{n}}p(x)\left\vert f^{\prime }(x)\right\vert ^{2}\mathrm{d}x,\text{ }%
\mathbf{q}_{n}\left[ f\right] =\int\nolimits_{\Gamma \backslash \Gamma
_{n}}q\left\vert f(x)\right\vert ^{2}\mathrm{d}x, \\
\text{dom}(\mathbf{t}_{0,n}^{0}) &=&\{f\in H_{c}^{1}(\overline{\Gamma
\backslash \Gamma _{n}};p,w):f|_{\partial \left( \overline{\Gamma \backslash
\Gamma _{n}}\right) }=0,\text{ }f|_{\overline{\Gamma \backslash \Gamma _{n}}%
\cap \Gamma _{n}}=0\}, \\
\text{dom}(\mathbf{q}_{n}) &=&\{f\in L_{w}^{2}(\Gamma \backslash \Gamma
_{n}):\left\vert \mathbf{q}_{n}\left[ f\right] \right\vert <\infty \}.
\end{eqnarray*}%
If $\mathbf{t}_{q,n}^{0}$ is bounded from below, denote $\mathbf{H}_{\mathbf{%
t}_{q,n}}$ the self-adjoint operator associated with $\mathbf{t}_{q,n}:=%
\overline{\mathbf{t}_{q,n}^{0}}.$

\begin{theorem}[Persson-type theorem]
\label{persson}Suppose Hypothesis \ref{hyp copy(1)} holds$.$ Then
\begin{equation*}
\inf \sigma _{ess}\left( \mathbf{H}_{\mathbf{t}_{q}}\right)
=\lim_{n\rightarrow \infty }\inf \sigma \left( \mathbf{H}_{\mathbf{t}%
_{q,n}}\right) .
\end{equation*}
\end{theorem}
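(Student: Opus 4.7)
My plan is to prove the two inequalities $\inf\sigma_{ess}(\mathbf{H}_{\mathbf{t}_q})\geq B$ and $\inf\sigma_{ess}(\mathbf{H}_{\mathbf{t}_q})\leq B$ separately, where $B:=\lim_{n\to\infty}\inf\sigma(\mathbf{H}_{\mathbf{t}_{q,n}})$. The limit exists in $(-\infty,+\infty]$ by monotonicity: any $f\in\mathrm{dom}(\mathbf{t}_{q,n+1}^0)$, extended by zero on $\Gamma_{n+1}\setminus\Gamma_n$, lies in $\mathrm{dom}(\mathbf{t}_{q,n}^0)$ with the same form value, so $\inf\sigma(\mathbf{H}_{\mathbf{t}_{q,n}})\leq\inf\sigma(\mathbf{H}_{\mathbf{t}_{q,n+1}})$. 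That each $\mathbf{t}_{q,n}^0$ is lower semibounded (so that $\mathbf{H}_{\mathbf{t}_{q,n}}$ is well defined) will follow from Hypothesis~\ref{hyp copy(1)}: the uniform bound $C_q<\infty$ on $\int_e q_-\,\mathrm{d}t$, the edge-length bounds $d_*\leq |e|\leq d^*$, $1/p\in L^1_{\mathrm{loc}}$, and the lower bound $C_w>0$ of $w$ off a compact set, together yield an edgewise Poincaré-type estimate absorbing $\mathbf{q}_-$ into $\mathbf{t}_{0,n}^0$ with a bounded additive remainder.

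\medskip

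\emph{Direction $\inf\sigma_{ess}(\mathbf{H}_{\mathbf{t}_q})\geq B$.} Given $\mu\in\sigma_{ess}(\mathbf{H}_{\mathbf{t}_q})$, Weyl's criterion provides $\{\varphi_k\}\subset\mathrm{dom}(\mathbf{H}_{\mathbf{t}_q})$ with $\|\varphi_k\|_{L^2_w}=1$, $\varphi_k\rightharpoonup 0$, and $(\mathbf{H}_{\mathbf{t}_q}-\mu)\varphi_k\to 0$ in $L^2_w$. Fixing $n$, I would construct a continuous cutoff $\chi_n:\Gamma\to[0,1]$ that vanishes on $\Gamma_n$, equals one off $\Gamma_{n+1}$, and interpolates edgewise on $\Gamma_{n+1}\setminus\Gamma_n$; then $\chi_n\varphi_k\in\mathrm{dom}(\mathbf{t}_{q,n})$. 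The IMS-type identity
\begin{equation*}
\mathbf{t}_{q,n}[\chi_n\varphi_k]=\mathrm{Re}\,\langle\mathbf{H}_{\mathbf{t}_q}\varphi_k,\chi_n^2\varphi_k\rangle+\int_{\Gamma_{n+1}\setminus\Gamma_n}p|\chi_n'|^2|\varphi_k|^2\,\mathrm{d}x,
\end{equation*}
combined with (i) the strong convergence $\varphi_k\to 0$ in $L^2_w(\Gamma_{n+1})$ (via compact embedding of $H^1(\Gamma_{n+1};p,w)$ into $L^2_w(\Gamma_{n+1})$, see below) and (ii) $\mathrm{supp}\,\chi_n'\subset\Gamma_{n+1}\setminus\Gamma_n$, forces $\mathbf{t}_{q,n}[\chi_n\varphi_k]/\|\chi_n\varphi_k\|_{L^2_w}^2\to\mu$ as $k\to\infty$. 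The variational principle yields $\inf\sigma(\mathbf{H}_{\mathbf{t}_{q,n}})\leq\mu$ for every $n$, hence $B\leq\mu$.

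\medskip

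\emph{Direction $\inf\sigma_{ess}(\mathbf{H}_{\mathbf{t}_q})\leq B$.} If $B=+\infty$ the previous direction already forces $\sigma_{ess}=\emptyset$, so assume $B<+\infty$. Fix $\varepsilon>0$, and for each $n$ choose $f_n\in\mathrm{dom}(\mathbf{t}_{q,n})$ with $\|f_n\|_{L^2_w}=1$ and $\mathbf{t}_q[f_n]=\mathbf{t}_{q,n}[f_n]\leq\inf\sigma(\mathbf{H}_{\mathbf{t}_{q,n}})+\varepsilon\leq B+\varepsilon$. Since $\mathrm{supp}\,f_n\subset\Gamma\setminus\Gamma_n$ and $\Gamma_n\uparrow\Gamma$, dominated convergence in the pairing against any fixed $g\in L^2_w(\Gamma)$ gives $f_n\rightharpoonup 0$. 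Choose $M$ with $\mathbf{H}_{\mathbf{t}_q}\geq -M$; splitting the spectral integral for $\mathbf{t}_q[f_n]$ at the threshold $B+2\varepsilon$ produces the Chebyshev-type bound
\begin{equation*}
\|E_{(-\infty,B+2\varepsilon]}(\mathbf{H}_{\mathbf{t}_q})f_n\|_{L^2_w}^2\geq \frac{\varepsilon}{M+B+2\varepsilon}>0.
\end{equation*}
If $\mathrm{Ran}\,E_{(-\infty,B+2\varepsilon]}(\mathbf{H}_{\mathbf{t}_q})$ were finite-dimensional, the weak convergence $f_n\rightharpoonup 0$ would force the projected sequence to converge strongly to zero, contradicting this lower bound. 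Hence this spectral subspace is infinite-dimensional, so $\inf\sigma_{ess}(\mathbf{H}_{\mathbf{t}_q})\leq B+2\varepsilon$; letting $\varepsilon\downarrow 0$ completes the direction.

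\medskip

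\textbf{Main obstacle.} The decisive technical point is the Rellich-type compact embedding $H^1(\Gamma_m;p,w)\hookrightarrow L^2_w(\Gamma_m)$ invoked in the first direction, since $p$ is not assumed bounded below. I plan to obtain it from the Cauchy--Schwarz estimate
\begin{equation*}
|f(x)-f(y)|\leq\left(\int_e\tfrac{1}{p}\,\mathrm{d}t\right)^{1/2}\!\left(\int_e p|f'|^2\,\mathrm{d}t\right)^{1/2},\quad x,y\in e,
\end{equation*}
which, combined with $1/p\in L^\eta\subset L^1_{\mathrm{loc}}$ (Hypothesis~\ref{hyp copy(1)}(1)) and the local finiteness of edges in $\Gamma_m$, delivers uniform boundedness and equicontinuity on each edge of any $H^1(\Gamma_m;p,w)$-bounded family; Arzelà--Ascoli gives edgewise uniform convergence, and $w\in L^1_{\mathrm{loc}}$ upgrades this to $L^2_w(\Gamma_m)$-convergence. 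The ancillary step of verifying that the Weyl sequence $\{\varphi_k\}$ is uniformly $H^1(\Gamma;p,w)$-bounded, so that this compactness applies, will use $\mathbf{t}_q[\varphi_k]\to\mu$ together with the $C_q$-bound to control $\mathbf{q}_-$. A secondary bookkeeping point is that the cutoff $\chi_n$ must have $\int p|\chi_n'|^2\,\mathrm{d}x<\infty$, which again is guaranteed by $1/p\in L^\eta$ combined with Hypothesis~\ref{hyp copy(1)}(3).
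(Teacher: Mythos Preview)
Your plan is correct and structurally matches the paper's proof: both directions use the same ingredients (Weyl sequence plus cutoff plus Arzel\`a--Ascoli local compactness for $\geq$; near-minimizers supported outside $\Gamma_n$ and converging weakly to zero for $\leq$). Two differences are worth noting. First, your IMS identity $\mathbf{t}_{q,n}[\chi_n\varphi_k]=\operatorname{Re}\langle\mathbf{H}_{\mathbf{t}_q}\varphi_k,\chi_n^2\varphi_k\rangle+\int p|\chi_n'|^2|\varphi_k|^2$ compresses into one line what the paper obtains by separately estimating $\int p|(\varphi_n u_m)'|^2$ and $\int q|\varphi_n u_m|^2$ over a page of computation; this is cleaner but requires you to verify that $\chi_n^2\varphi_k\in\mathrm{dom}(\mathbf{t}_q)$, which needs the same approximation-by-compactly-supported-functions argument the paper carries out for $\varphi_n u_m\in\mathrm{dom}(\mathbf{t}_{q,n})$. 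Second, for the reverse inequality the paper simply invokes an external lemma (Lemma~\ref{persson reverse}, from \cite{Haeseler S}), whereas your Chebyshev bound $\|E_{(-\infty,B+2\varepsilon]}f_n\|^2\geq\varepsilon/(M+B+2\varepsilon)$ combined with the finite-rank contradiction is a short self-contained proof of exactly that lemma.

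One point where your sketch needs care: a naive edgewise-linear cutoff has $\chi_n'$ constant, so $\int p|\chi_n'|^2\,\mathrm{d}x$ would require $p\in L^1_{\mathrm{loc}}$, which is \emph{not} assumed. The paper avoids this by taking $\varphi_n(x)=1-\int_x^{t(e)}\sqrt{w/p}\,\mathrm{d}t\big/\int_e\sqrt{w/p}\,\mathrm{d}t$ on each transition edge, giving $p|\varphi_n'|^2\leq Cw$ pointwise, so the localization error is dominated by $\int_{\tilde\Gamma_n}w|u_m|^2\to 0$. When you fill in your ``secondary bookkeeping point'', use a $1/p$- or $\sqrt{w/p}$-weighted interpolation rather than a linear one; then your claim that $1/p\in L^\eta$ and Hypothesis~\ref{hyp copy(1)}(3) suffice becomes correct, and the error term is controlled either by the pointwise bound $p|\chi_n'|^2\lesssim w$ or by the uniform convergence $\varphi_k\to 0$ on $\Gamma_{n+1}$ that your Arzel\`a--Ascoli argument already delivers.
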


Before proving the Persson-type theorem, we need some preliminary lemmas and
notations.

\begin{lemma}
\label{sup}Suppose the conditions $(1)-(3)$ in Hypothesis \ref{hyp copy(1)}
are satisfied$.$ For every $\epsilon >0,$ there exists a constant $%
C_{\epsilon }$ such that for all $e\in \mathcal{E}\left( \Gamma \right) ,$
\begin{equation}
\sup\limits_{x\in e}\left\vert f(x)\right\vert ^{2}\leq \epsilon
\int\nolimits_{e}p(x)\left\vert f^{\prime }(x)\right\vert ^{2}\mathrm{d}%
x+C_{\epsilon }\int\nolimits_{e}w\left\vert f(x)\right\vert ^{2}\mathrm{d}x;
\end{equation}
\end{lemma}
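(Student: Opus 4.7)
The strategy is an edge-by-edge Sobolev-type interpolation combined with a subdivision (Ehrling-style) argument to drive the coefficient of $\int_e p|f'|^2$ below any prescribed $\epsilon$; Hypothesis \ref{hyp copy(1)}(1)--(3) is then used to make the resulting constant uniform in $e$.

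Fix $e\in\mathcal{E}(\Gamma)$, identify it with $[0,|e|]$, and let $\epsilon>0$. The starting estimate is Cauchy--Schwarz: for $x,y\in e$,
$$
|f(x)-f(y)|^2 = \left|\int_y^x \frac{1}{\sqrt{p}}\sqrt{p}\,f'\,\mathrm{d}t\right|^2 \leq \|1/p\|_{L^1([y,x])} \int_e p|f'|^2\,\mathrm{d}t.
$$
Partition $e$ into $N$ equal subintervals $e_1,\dots,e_N$ of length $|e|/N$. For $\eta>1$, H\"older's inequality gives $\|1/p\|_{L^1(e_j)} \leq (d^*/N)^{1-1/\eta}\|1/p\|_{L^\eta(\Gamma)}$, and we can choose $N=N(\epsilon)$ so that this is less than $\epsilon/2$ uniformly in $e$ and $j$. (The case $\eta=1$ is handled by noting that $\sum_e\|1/p\|_{L^1(e)}<\infty$ leaves only finitely many edges with $\|1/p\|_{L^1(e)}>\epsilon/2$; on the rest no subdivision is needed, and on the bad finitely many we subdivide individually by absolute continuity of the integral.)

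For each $j$ with $\int_{e_j} w\,\mathrm{d}t>0$, a mean-value argument furnishes $y_j\in e_j$ with $|f(y_j)|^2 \leq \int_{e_j} w|f|^2\,\mathrm{d}t \,/\int_{e_j} w\,\mathrm{d}t$. Plugging $y=y_j$ into the Cauchy--Schwarz bound, applying $(a+b)^2\leq 2a^2+2b^2$, and taking supremum over $x\in e_j$, one obtains
$$
\sup_{x\in e_j}|f(x)|^2 \leq \frac{2}{\int_{e_j}w\,\mathrm{d}t}\int_{e_j}w|f|^2\,\mathrm{d}t + \epsilon\int_{e_j} p|f'|^2\,\mathrm{d}t.
$$
Taking the maximum over $j$ and using $\int_{e_j}\leq \int_e$ yields the desired inequality with $C_\epsilon = \max_{1\leq j\leq N} 2/\int_{e_j}w\,\mathrm{d}t$.

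\textbf{Main obstacle.} The central difficulty is to bound $C_\epsilon$ uniformly in $e$. For edges $e\subset\Gamma\setminus\Gamma'$, Hypothesis \ref{hyp copy(1)}(2) supplies $w\geq C_w$ a.e., so $\int_{e_j}w\,\mathrm{d}t\geq C_w d_*/N$ and $C_\epsilon\leq 2N/(C_w d_*)$, independent of $e$. The edges meeting $\Gamma'$ form a finite set (by local finiteness of $\Gamma$ and compactness of $\Gamma'$), so each is treated individually---on such an edge $w>0$ a.e.\ yields a finite edge-specific constant---and the overall $C_\epsilon$ is the finite maximum.
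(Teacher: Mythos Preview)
Your proof is correct and follows essentially the same approach as the paper: both rely on the Cauchy--Schwarz estimate $|f(x)-f(y)|^2\le \bigl(\int 1/p\bigr)\int p|f'|^2$, use the $L^\eta$ hypothesis on $1/p$ (with the same case split $\eta=1$ versus $\eta\in(1,\infty]$) to make $\int 1/p$ small on short subintervals, and invoke conditions (2)--(3) to obtain a uniform lower bound on the local $w$-mass. The only cosmetic difference is that the paper uses a sliding window $I(x,e)=e\cap\Gamma(x;\delta)$ and integrates against $w(y)\,\mathrm{d}y$, whereas you use a fixed equipartition into $N$ pieces together with a mean-value point $y_j$; both handle the finitely many edges meeting the compact $\Gamma'$ separately.
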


\begin{proof}
Since $1/p\in L^{\eta }\left( \Gamma \right) $ for some $\eta \in \left[
1,+\infty \right] ,$ for every $\epsilon >0,$ there exists $\delta >0$ such
that for all $e\in \mathcal{E}\left( \Gamma \right) $ and $x\in e,$ one has%
\begin{equation}
\int\nolimits_{e\cap \Gamma (x;\delta )}\frac{1}{p(t)}\mathrm{d}t<\frac{%
\epsilon }{2},  \label{pdelta}
\end{equation}%
where $\Gamma (x;\delta )=\left\{ y\in \Gamma \left\vert \rho \left(
x,y\right) \leq \delta \right. \right\} .$ We can assume that $\delta <\frac{%
d_{\ast }}{2}.$ In fact, it is easy to prove the case when $\eta =1$ or $%
\eta =+\infty ;$ for $\eta \in \left( 1,+\infty \right) ,$ this can be seen
with the help of the Holder inequality. For instance, if $1/p\in L^{2}\left(
\Gamma \right) ,$ for every $\epsilon >0,$ there exists $\delta \left(
\epsilon \right) >0$ such that for all $x\in \Gamma ,$
\begin{equation*}
\int\nolimits_{e\cap \Gamma (x;\delta )}\frac{1}{p(t)}\mathrm{d}t\leq \left(
\int\nolimits_{e\cap \Gamma (x;\delta )}1^{2}\mathrm{d}t\int\nolimits_{%
\Gamma (x;\delta )}\frac{1}{p^{2}(t)}\mathrm{d}t\right) ^{\frac{1}{2}}\leq
d_{\ast }\cdot \left( \int\nolimits_{\Gamma }\frac{1}{p^{2}(t)}\mathrm{d}%
t\right) ^{\frac{1}{2}}<\frac{\epsilon }{2}.
\end{equation*}%
Under the condition on $w,$ there exists $c>0$ such that for all $x\in
\Gamma ,$
\begin{equation}
\int\nolimits_{\Gamma (x;\frac{\delta }{2})}w(t)\mathrm{d}t>c.
\end{equation}%
For $f\in H^{1}(\Gamma ;p,w)\ $and $x,y\in e\in \mathcal{E}\left( \Gamma
\right) ,$ one has%
\begin{eqnarray}
\left\vert f\left( x\right) \right\vert ^{2} &\leq &2\left\vert f\left(
y\right) \right\vert ^{2}+2\left\vert f\left( x\right) -f\left( y\right)
\right\vert ^{2}=2\left\vert f\left( y\right) \right\vert ^{2}+2\left\vert
\int\nolimits_{x}^{y}f^{\prime }\left( t\right) \mathrm{d}t\right\vert ^{2}
\label{A2} \\
&\leq &2\left\vert f\left( y\right) \right\vert
^{2}+2\int\nolimits_{x}^{y}p\left( t\right) \left\vert f^{\prime }\left(
t\right) \right\vert ^{2}\mathrm{d}t\int\nolimits_{x}^{y}\frac{1}{p(t)}%
\mathrm{d}t.  \notag
\end{eqnarray}%
We multiply $\left( \ref{A2}\right) $ with $w(y)$ and integrate over $%
I(x,e)=e\cap \Gamma (x;\delta )$ for arbitrary $e\in \mathcal{E}\left(
\Gamma \right) $ and $x\in e.$ Note that the volume of $I(x,e)$ is $\geq
\delta .$ Then
\begin{eqnarray}
&&\left\vert f\left( x\right) \right\vert ^{2}\int\nolimits_{I(x,e)}w(y)dy \\
&\leq &2\int\nolimits_{I(x,e)}\left\vert f\left( y\right) \right\vert
^{2}w(y)dy+2\int\nolimits_{I(x,e)}w(y)\left( \int\nolimits_{x}^{y}p\left(
t\right) \left\vert f^{\prime }\left( t\right) \right\vert ^{2}\mathrm{d}%
t\int\nolimits_{x}^{y}\frac{1}{p(t)}\mathrm{d}t\right) dy  \notag \\
&\leq &2\int\nolimits_{I(x,e)}\left\vert f\left( y\right) \right\vert
^{2}w(y)dy+2\int\nolimits_{I(x,e)}w(y)dy\int\nolimits_{e}p\left( y\right)
\left\vert f^{\prime }\left( y\right) \right\vert
^{2}dy\int\nolimits_{I(x,e)}\frac{1}{p(y)}dy  \notag
\end{eqnarray}%
Dividing this by $\int\nolimits_{I(x,e)}w(y)dy$ and defining $C_{\epsilon }=%
\frac{2}{c},$ one proves the statement$.$
\end{proof}

\begin{lemma}
\label{sup copy(1)}Suppose Hypothesis \ref{hyp copy(1)} holds$.$ Then for
every $\epsilon >0,$ there exists a constant $C_{\epsilon }$ such that
\begin{equation}
\int\nolimits_{\Gamma }q_{-}\left\vert f(x)\right\vert ^{2}\mathrm{d}x\leq
\epsilon \int\nolimits_{\Gamma }p(x)\left\vert f^{\prime }(x)\right\vert ^{2}%
\mathrm{d}x+C_{\epsilon }\int\nolimits_{\Gamma }w\left\vert f(x)\right\vert
^{2}\mathrm{d}x  \label{q-}
\end{equation}%
for all $f\in H^{1}(\Gamma ;p,w).$ Moreover, the form $\mathbf{t}_{q}^{0}$
is lower semibounded.
\end{lemma}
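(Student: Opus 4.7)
The plan is to reduce the inequality \eqref{q-} to an edgewise statement and then apply Lemma \ref{sup} with a cleverly rescaled $\epsilon$. Fix $\epsilon > 0$. On each edge $e \in \mathcal{E}(\Gamma)$, I would start from the trivial estimate
\begin{equation*}
\int_{e} q_{-}|f(x)|^{2}\,\mathrm{d}x \le \Bigl(\sup_{x\in e}|f(x)|^{2}\Bigr) \int_{e} q_{-}\,\mathrm{d}t \le C_{q}\sup_{x\in e}|f(x)|^{2},
\end{equation*}
invoking part (4) of Hypothesis \ref{hyp copy(1)} for the last inequality. This reduces matters to controlling $\sup_{e}|f|^{2}$ edgewise.

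Next, apply Lemma \ref{sup} with the rescaled parameter $\epsilon/C_{q}$ in place of $\epsilon$; this yields a constant $C_{\epsilon/C_{q}}$ such that
\begin{equation*}
\sup_{x\in e}|f(x)|^{2}\le \tfrac{\epsilon}{C_{q}}\int_{e}p|f'|^{2}\,\mathrm{d}x + C_{\epsilon/C_{q}}\int_{e} w|f|^{2}\,\mathrm{d}x
\end{equation*}
uniformly over all $e\in\mathcal{E}(\Gamma)$. Combining with the preceding display, multiplying through by $C_{q}$ and summing over all edges (the sum on the right is controlled because $f\in H^{1}(\Gamma;p,w)$), I obtain \eqref{q-} with $C_{\epsilon}:=C_{q}\cdot C_{\epsilon/C_{q}}$.

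For the second assertion, I would split $q=q_{+}-q_{-}$ and write, for any $f\in\mathrm{dom}(\mathbf{t}_{q}^{0})$,
\begin{equation*}
\mathbf{t}_{q}^{0}[f]=\int_{\Gamma} p|f'|^{2}\,\mathrm{d}x+\int_{\Gamma}q_{+}|f|^{2}\,\mathrm{d}x-\int_{\Gamma}q_{-}|f|^{2}\,\mathrm{d}x.
\end{equation*}
Discarding the nonnegative $q_{+}$ term and applying \eqref{q-} with, say, $\epsilon=\tfrac{1}{2}$, I get
\begin{equation*}
\mathbf{t}_{q}^{0}[f]\ge \tfrac{1}{2}\int_{\Gamma}p|f'|^{2}\,\mathrm{d}x-C_{1/2}\int_{\Gamma}w|f|^{2}\,\mathrm{d}x\ge -C_{1/2}\|f\|_{L^{2}_{w}(\Gamma)}^{2},
\end{equation*}
which gives lower semiboundedness of $\mathbf{t}_{q}^{0}$.

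The argument is essentially bookkeeping once Lemma \ref{sup} is in hand, so there is no serious obstacle; the only point worth care is ensuring that the constant in Lemma \ref{sup} can be taken uniform in $e$ (which it is, by inspection of that proof) so that the final summation over edges produces a global inequality rather than an edge-dependent one.
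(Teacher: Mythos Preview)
Your proof is correct and follows essentially the same route as the paper: bound $\int_e q_-|f|^2$ by $C_q\sup_e|f|^2$ edgewise, invoke Lemma~\ref{sup} (with a rescaled $\epsilon$) uniformly over edges, sum, and then take $\epsilon=\tfrac12$ to deduce lower semiboundedness. The only cosmetic point is the degenerate case $C_q=0$, where \eqref{q-} is trivial and no appeal to Lemma~\ref{sup} is needed.
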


\begin{proof}
The claim $\left( \ref{q-}\right) $ is a direct consequence of Lemma \ref%
{sup} in virtue of
\begin{equation*}
\int\nolimits_{\Gamma }q_{-}\left\vert f(x)\right\vert ^{2}\mathrm{d}%
x=\sum\limits_{e\in \mathcal{E}\left( \Gamma \right)
}\int\nolimits_{e}q_{-}\left\vert f(x)\right\vert ^{2}\mathrm{d}x\leq \left(
\sup_{e\in \mathcal{E}\left( \Gamma \right) }\int\nolimits_{e}q_{-}\mathrm{d}%
t\right) \sum\limits_{e\in \mathcal{E}\left( \Gamma \right) }\left\Vert
f^{2}\right\Vert _{L^{\infty }\left( e\right) }.
\end{equation*}%
Taking $\left( \ref{q-}\right) $ into account and letting $\epsilon =\frac{1%
}{2},$ we obtain%
\begin{equation*}
\mathbf{t}_{q}^{0}\left[ f\right] =\int\nolimits_{\Gamma }p(x)\left\vert
f^{\prime }(x)\right\vert ^{2}+q_{+}\left\vert f(x)\right\vert ^{2}\mathrm{d}%
x-\int\nolimits_{\Gamma }q_{-}\left\vert f(x)\right\vert ^{2}\mathrm{d}x\geq
-C_{\frac{1}{2}}\int\nolimits_{\Gamma }w\left\vert f(x)\right\vert ^{2}%
\mathrm{d}x,
\end{equation*}%
which implies that the form $\mathbf{t}_{q}^{0}$ is lower semibounded$.$
\end{proof}

\begin{remark}
Suppose Hypothesis \ref{hyp copy(1)} holds$.\ $Then Lemma \ref{sup copy(1)}
also holds for the graphs $\overline{\Gamma \backslash \Gamma _{n}}$
according to the definition of $\Gamma _{n}.$
\end{remark}

\begin{lemma}
\label{s close}Suppose Hypothesis \ref{hyp copy(1)} holds$.$ The following
form
\begin{eqnarray*}
\mathbf{s}_{q}^{0}\left[ f\right] &=&\int\nolimits_{\Gamma }p(x)\left\vert
f^{\prime }(x)\right\vert ^{2}+q\left\vert f(x)\right\vert ^{2}\mathrm{d}x,%
\text{ } \\
\text{dom}(\mathbf{s}_{q}^{0}) &=&\{f\in H^{1}(\Gamma ;p,w):\left\vert
\mathbf{q}\left[ f\right] \right\vert <\infty ,\text{ }f|_{\partial \Gamma
}=0\}\text{ }
\end{eqnarray*}%
is lower semibounded and closed$.$ Moreover, $\mathbf{t}_{q}\subset \mathbf{s%
}_{q}^{0}.$
\end{lemma}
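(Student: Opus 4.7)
The plan is to establish the three claims in order: lower semiboundedness, closedness, and the inclusion $\mathbf{t}_q\subset\mathbf{s}_q^0$.

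First, for lower semiboundedness, note that Lemma \ref{sup copy(1)} applies to every $f\in H^1(\Gamma;p,w)$ and hence to every $f\in\mathrm{dom}(\mathbf{s}_q^0)$. Taking $\epsilon=1/2$ in (\ref{q-}) and splitting $q=q_+-q_-$ gives
\[
\mathbf{s}_q^0[f]=\int_\Gamma p|f'|^2\,dx+\int_\Gamma q_+|f|^2\,dx-\int_\Gamma q_-|f|^2\,dx\ge -C_{1/2}\,\|f\|_{L_w^2(\Gamma)}^2,
\]
so $\mathbf{s}_q^0$ is lower semibounded.

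Second, for closedness, I would introduce the auxiliary nonnegative form $\mathbf{s}_+^0[f]:=\int_\Gamma(p|f'|^2+q_+|f|^2)\,dx$ on $\mathrm{dom}(\mathbf{s}_q^0)$ and show it is closed. Given a sequence Cauchy in the graph norm $(\|\cdot\|_{L_w^2}^2+\mathbf{s}_+^0[\cdot])^{1/2}$, extract an $L_w^2$-limit $f$, an $L^2$-limit $g$ of $\sqrt{p}\,f_n'$, and an $L^2$-limit $h$ of $\sqrt{q_+}\,f_n$. Using Lemma \ref{sup}, applied to the differences $f_n-f_m$, one obtains edgewise uniform Cauchyness; so $f$ is edgewise absolutely continuous with $\sqrt{p}\,f'=g$, continuous at every inner vertex, and vanishes on $\partial\Gamma$, while dominated convergence yields $h=\sqrt{q_+}\,f$. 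Hence $f\in\mathrm{dom}(\mathbf{s}_q^0)$ and $\mathbf{s}_+^0[f_n-f]\to 0$, proving closedness of $\mathbf{s}_+^0$. Finally, Lemma \ref{sup copy(1)} shows that $f\mapsto \int_\Gamma q_-|f|^2\,dx$ is $\mathbf{s}_+^0$-bounded with relative bound zero, so by the KLMN theorem the perturbed form $\mathbf{s}_q^0=\mathbf{s}_+^0-\int_\Gamma q_-|f|^2\,dx$ is closed and lower semibounded.

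Third, for $\mathbf{t}_q\subset\mathbf{s}_q^0$: directly from the definitions, $\mathrm{dom}(\mathbf{t}_q^0)\subset\mathrm{dom}(\mathbf{s}_q^0)$, since a compactly supported function in $H^1(\Gamma;p,w)$ that vanishes on $\partial\Gamma$ and satisfies $|\mathbf{q}[f]|<\infty$ meets every condition defining $\mathrm{dom}(\mathbf{s}_q^0)$, with identical form values. Thus $\mathbf{s}_q^0$ is a closed extension of $\mathbf{t}_q^0$, and since $\mathbf{t}_q=\overline{\mathbf{t}_q^0}$ is by definition the smallest closed extension, the inclusion follows.

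The main obstacle will be closedness of $\mathbf{s}_+^0$: what must be verified is that boundary and vertex conditions (continuity at inner vertices, vanishing on $\partial\Gamma$) and edgewise absolute continuity are preserved under form-norm limits. Lemma \ref{sup} is essential at this step because it upgrades $L_w^2$-convergence to edgewise uniform convergence, which is what allows the pointwise conditions at vertices and boundary points to pass to the limit.
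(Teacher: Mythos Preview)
Your proof is correct and follows essentially the same approach as the paper: decompose $q=q_+-q_-$, establish that the form $\mathbf{s}_{q_+}^0$ is closed and nonnegative, and then apply the KLMN theorem using Lemma~\ref{sup copy(1)} to show that the $q_-$-perturbation has relative bound zero. The paper simply declares the closedness of $\mathbf{s}_{q_+}^0$ ``obvious'' and omits the argument for $\mathbf{t}_q\subset\mathbf{s}_q^0$, whereas you supply both details---the use of Lemma~\ref{sup} to upgrade form-norm convergence to edgewise uniform convergence is exactly the right ingredient for the former, and your minimality-of-closure argument for the latter is standard and correct.
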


\begin{proof}
Define%
\begin{eqnarray*}
&&\mathbf{q}_{-}:=-\int\nolimits_{\Gamma }q_{-}\left\vert f(x)\right\vert
^{2}\mathrm{d}x, \\
&&\text{dom}(\mathbf{q}_{-}):=\{f\in L_{w}^{2}(\Gamma ):\left\vert \mathbf{q}%
_{-}\left[ f\right] \right\vert <\infty \}.
\end{eqnarray*}%
It follows from Lemma \ref{sup copy(1)} that the form $\mathbf{q}_{-}$ is
infinitesimally $\mathbf{s}_{q_{+}}^{0}$ bounded. It is obvious that $%
\mathbf{s}_{q_{+}}^{0}$ is lower semibounded and closed. Applying KLMN
theorem $\cite{kato}$, we complete the proof$.$
\end{proof}

\begin{remark}
\label{s close copy(1)}Suppose Hypothesis \ref{hyp copy(1)} holds$.$ The
forms
\begin{eqnarray*}
\mathbf{s}_{q,n}^{0}\left[ f\right] &=&\int\nolimits_{\Gamma \backslash
\Gamma _{n}}p(x)\left\vert f^{\prime }(x)\right\vert ^{2}+q\left\vert
f(x)\right\vert ^{2}\mathrm{d}x, \\
\text{dom}(\mathbf{s}_{q,n}^{0}) &=&\{f\in H^{1}(\Gamma \backslash \Gamma
_{n};p,w):\left\vert \mathbf{q}_{n}\left[ f\right] \right\vert <\infty ,%
\text{ }f|_{\partial \left( \overline{\Gamma \backslash \Gamma _{n}}\right)
}=0,\text{ }f|_{\overline{\Gamma \backslash \Gamma _{n}}\cap \Gamma _{n}}=0\}
\end{eqnarray*}%
are lower semibounded and closed$.$ Moreover, $\mathbf{t}_{q,n}\subset
\mathbf{s}_{q,n}^{0}.$
\end{remark}

\begin{lemma}
\label{persson reverse} Let $\mathbf{s}$ be a closed quadratic form on $%
L_{w}^{2}\left( \Gamma \right) $ that is bounded from below and let $\mathbf{%
H}$ be the corresponding self-adjoint operator. Assume that there is a
normalized sequence $f_{n}$ in dom$\left( \mathbf{s}\right) $ that converges
weakly to zero. Then%
\begin{equation*}
\inf \sigma _{ess}\left( \mathbf{H}\right) \leq \underset{n\rightarrow
\infty }{\lim \inf }\text{ }\mathbf{s[}f_{n}\mathbf{].}
\end{equation*}
\end{lemma}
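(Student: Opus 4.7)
The plan is to prove this by a direct spectral-theoretic estimate: decompose $f_n$ via the spectral resolution of $\mathbf{H}$, exploit the fact that only discrete spectrum of finite total multiplicity lies below $\inf\sigma_{ess}(\mathbf{H})$, and use the weak convergence $f_n\rightharpoonup 0$ to kill the low-energy contribution. Set $E:=\inf\sigma_{ess}(\mathbf{H})$ and assume $E>-\infty$ (otherwise the inequality is trivial). It suffices to show $\liminf_n\mathbf{s}[f_n]\ge\mu'$ for every $\mu'<E$ and then let $\mu'\nearrow E$.

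Fix such a $\mu'<E$. The first key step is the observation that the spectral projection $P:=E_{\mathbf{H}}((-\infty,\mu'])$ is of finite rank: since $\mu'$ lies strictly below the essential spectrum, $\sigma(\mathbf{H})\cap(-\infty,\mu']$ consists of finitely many eigenvalues of finite multiplicity. In particular $P$ is a compact operator. Because $f_n\rightharpoonup 0$ in $L_w^2(\Gamma)$ and $P$ is compact, one gets the strong convergence $\|Pf_n\|\to 0$, and hence $\|(I-P)f_n\|\to 1$ since $\|f_n\|=1$.

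Next I would write $\mathbf{s}$ in its spectral form. Because $\mathbf{s}$ is the closed, lower semibounded form associated with $\mathbf{H}$, for every $f\in\mathrm{dom}(\mathbf{s})$ one has
\begin{equation*}
\mathbf{s}[f]=\int_{\mathbb{R}}\lambda\,d\|E_\lambda f\|^{2}
=\int_{(-\infty,\mu']}\lambda\,d\|E_\lambda f\|^{2}+\int_{(\mu',\infty)}\lambda\,d\|E_\lambda f\|^{2}.
\end{equation*}
Letting $m:=\inf\sigma(\mathbf{H})>-\infty$, the first integral is bounded below by $m\|Pf\|^{2}$ and the second by $\mu'\|(I-P)f\|^{2}$, so
\begin{equation*}
\mathbf{s}[f_n]\;\ge\;m\,\|Pf_n\|^{2}+\mu'\,\|(I-P)f_n\|^{2}.
\end{equation*}
Taking $\liminf_{n\to\infty}$ and using $\|Pf_n\|\to 0$, $\|(I-P)f_n\|\to 1$ yields $\liminf_n\mathbf{s}[f_n]\ge\mu'$, as required.

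The only place that really needs care is justifying that $f_n\in\mathrm{dom}(\mathbf{s})=\mathrm{dom}(|\mathbf{H}-m|^{1/2})$ makes the spectral integral representation of $\mathbf{s}[f_n]$ legitimate, and that $P$ maps $L_w^2(\Gamma)$ into $\mathrm{dom}(\mathbf{s})$, so that the decomposition $f_n=Pf_n+(I-P)f_n$ respects the form domain. Both facts follow from the functional calculus and the boundedness of $P$ (finite rank, range in $\mathrm{dom}(\mathbf{H})\subset\mathrm{dom}(\mathbf{s})$). I expect this bookkeeping to be the only subtlety; the rest is the standard "weak convergence kills finite-dimensional projections" idea together with the spectral theorem, and the passage $\mu'\nearrow E$ at the end.
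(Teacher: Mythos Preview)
Your argument is correct: the spectral projection $P=E_{\mathbf{H}}((-\infty,\mu'])$ has finite rank for every $\mu'<\inf\sigma_{ess}(\mathbf{H})$, compactness of $P$ turns weak convergence $f_n\rightharpoonup 0$ into $\|Pf_n\|\to 0$, and the spectral representation of $\mathbf{s}$ gives the lower bound $\mathbf{s}[f_n]\ge m\|Pf_n\|^2+\mu'\|(I-P)f_n\|^2\to\mu'$. The bookkeeping you flag (that $\mathrm{dom}(\mathbf{s})$ coincides with the domain of the form associated with $\mathbf{H}$ via the spectral theorem, and that $P$ maps into $\mathrm{dom}(\mathbf{H})\subset\mathrm{dom}(\mathbf{s})$) is routine and holds exactly as you say.

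As for comparison with the paper: there is nothing to compare, since the paper does not supply a proof of this lemma at all but simply refers the reader to \cite{Haeseler S}. Your self-contained spectral-calculus argument is the standard one and is almost certainly what the cited reference does in its setting; in any case it is a complete proof in the present context.
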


\begin{proof}
see $\cite{Haeseler S}$.
\end{proof}

Now we in a position to prove the Persson-type Theorem for the operator $%
\mathbf{H}_{\mathbf{t}_{q}}$.

\begin{proof}[Proof of Theorem \protect\ref{persson}]
Firstly, we prove that
\begin{equation}
\inf \sigma _{ess}\left( \mathbf{H}_{\mathbf{t}_{q}}\right) \geq
\lim\limits_{n\rightarrow \infty }\inf \sigma \left( \mathbf{H}_{\mathbf{t}%
_{q,n}}\right) =:l.  \label{zuo}
\end{equation}%
For any $\lambda \in \sigma _{ess}\left( \mathbf{H}_{\mathbf{t}_{q}}\right)
, $ we shall prove that $\lambda \geq l.$ From Weyl theorem, one can choose
a sequence $\left\{ u_{m}\right\} _{m=1}^{\infty }\subset $ $\mathrm{dom}%
\left( \mathbf{H}_{\mathbf{t}_{q}}\right) $ such that
\begin{equation}
\left\Vert u_{m}\right\Vert _{L_{w}^{2}(\Gamma )}=1\text{ for all }m,
\label{um1}
\end{equation}%
\begin{equation*}
u_{m}\overset{w}{\longrightarrow }\text{ 0 as }m\rightarrow \infty ,
\end{equation*}%
\begin{equation*}
\left\Vert \mathbf{H}_{\mathbf{t}_{q}}u_{m}-\lambda u_{m}\right\Vert
_{L_{w}^{2}(\Gamma )}\rightarrow 0\text{ as }m\rightarrow \infty .
\end{equation*}%
Denote%
\begin{equation*}
\tilde{\Gamma}_{n}:=\Gamma _{n}\cup \left\{ e\in \mathcal{E}\left( \Gamma
\right) :e\text{ is an edge with only one vertex }v\text{ in }\Gamma
_{n}\right\} .
\end{equation*}%
Then $\Gamma \backslash \tilde{\Gamma}_{n}$ is the union of all edges which
do not have vertices in $\Gamma _{n}.$ Now define functions $\varphi _{n}$
on $\Gamma $ such that%
\begin{equation}
\varphi _{n}=0\text{ for }x\in \Gamma _{n},  \label{0}
\end{equation}%
\begin{equation}
\varphi _{n}=1\text{ for }x\in \Gamma \backslash \tilde{\Gamma}_{n},
\label{1}
\end{equation}%
\begin{equation}
0\leq \varphi _{n}\leq 1\text{, }  \label{01}
\end{equation}%
\begin{equation*}
\frac{\sqrt{p}}{\sqrt{w}}\varphi _{n}^{\prime }\text{ bounded in }\Gamma .
\end{equation*}%
Let $e$ be an edge with only one vertex $v_{e}$ in $\Gamma _{n}.$ Without
loss of generality, assume that $v_{e}$ is the initial vertex of $e.$ Then
for $x\in e,$ we can define $\varphi _{n}\left( x\right) =1-\frac{%
\int_{x}^{t\left( e\right) }\frac{\sqrt{w}}{\sqrt{p}}\mathrm{d}t}{\int_{e}%
\frac{\sqrt{w}}{\sqrt{p}}\mathrm{d}t}.$

Now denote $f_{m,n}:$ $=\varphi _{n}u_{m}.$ We are going to prove that $%
\left\{ f_{m,n}\right\} _{m=1}^{\infty }\subset $ dom$(\mathbf{t}_{q,n})$
and as $m\rightarrow \infty ,$%
\begin{equation}
\mathbf{t}_{q,n}\left[ f_{m,n}\right] \leq \lambda +o(1)\text{ }  \label{tnf}
\end{equation}%
and
\begin{equation}
\left\Vert f_{m,n}\right\Vert _{L_{w}^{2}(\Gamma \backslash \Gamma
_{n})}^{2}=1+o(1).  \label{fn}
\end{equation}%
We observe that for any $u_{m}\in $ dom$\left( \mathbf{H}_{\mathbf{t}%
_{q}}\right) ,$ there exists a sequence $\left\{ g_{k,m}\right\}
_{k=1}^{\infty }\subset $\textrm{\ $\mathrm{dom}$}$\left( \mathbf{t}%
_{q}^{0}\right) $ such that%
\begin{equation}
\left\Vert g_{k,m}-u_{m}\right\Vert _{\mathbf{t}_{q}}\rightarrow 0\text{ as }%
k\rightarrow \infty .  \label{guquyuling}
\end{equation}%
In order to prove $\left\{ f_{m,n}\right\} _{m=1}^{\infty }\in $ dom$(%
\mathbf{t}_{q,n}),$ it is enough to prove that $\varphi _{n}g_{k,m}\in $%
\textrm{\ $\mathrm{dom}$}$\left( \mathbf{t}_{q,n}^{0}\right) $ and $%
\left\Vert \varphi _{n}g_{k,m}-\varphi _{n}u_{m}\right\Vert _{\mathbf{t}%
_{q,n}}\rightarrow 0$ as $k\rightarrow \infty .$ In fact, by use of the
properties of $\varphi _{n}$ and the fact $\left\{ g_{k,m}\right\}
_{k=1}^{\infty }\subset $ dom$\left( \mathbf{t}_{q}^{0}\right) ,$ we get $%
\left\{ \varphi _{n}g_{k,m}\right\} _{k=1}^{\infty }\subset $ dom$\left(
\mathbf{q}_{n}\right) $ and
\begin{eqnarray*}
\int\nolimits_{\Gamma \backslash \Gamma _{n}}p\left\vert \left( \varphi
_{n}g_{k,m}\right) ^{\prime }\right\vert ^{2}\mathrm{d}x &\leq &2\left(
\int\nolimits_{\Gamma \backslash \Gamma _{n}}p\left\vert \varphi
_{n}g_{k,m}^{\prime }\right\vert ^{2}\mathrm{d}x+\int\nolimits_{\Gamma
\backslash \Gamma _{n}}p\left\vert \varphi _{n}^{\prime }g_{k,m}\right\vert
^{2}\mathrm{d}x\right) \\
&\leq &C\left( \int\nolimits_{\Gamma \backslash \Gamma _{n}}p\left\vert
g_{k,m}^{\prime }\right\vert ^{2}\mathrm{d}x+\int\nolimits_{\Gamma
\backslash \Gamma _{n}}\left\vert g_{k,m}\right\vert ^{2}w\mathrm{d}x\right)
\end{eqnarray*}%
where $C$ is some constant, which implies that $\varphi _{n}g_{k,m}\in $ dom$%
\left( \mathbf{t}_{q,n}^{0}\right) .$ From Lemma \ref{sup copy(1)}, \ it
follows that%
\begin{eqnarray*}
&&\left\Vert g_{k,m}-u_{m}\right\Vert _{\mathbf{t}_{q}} \\
&=&\mathbf{t}_{q}\left[ g_{k,m}-u_{m}\right] +\left( c+1\right)
\int\nolimits_{\Gamma }w\left\vert g_{k,m}-u_{m}\right\vert ^{2}\mathrm{d}x
\\
&\geq &\frac{1}{2}\int\nolimits_{\Gamma }p\left\vert \left(
g_{k,m}-u_{m}\right) ^{\prime }\right\vert ^{2}\mathrm{d}x+\int\nolimits_{%
\Gamma }q_{+}\left\vert g_{k,m}-u_{m}\right\vert ^{2}\mathrm{d}x \\
&&-C_{\frac{1}{2}}\int\nolimits_{\Gamma }w\left\vert
g_{k,m}-u_{m}\right\vert ^{2}\mathrm{d}x+\int\nolimits_{\Gamma }\left(
c+1\right) w\left\vert g_{k,m}-u_{m}\right\vert ^{2}\mathrm{d}x,
\end{eqnarray*}%
where $c>0$ is some positive constant such that $\mathbf{t}_{q}\geq -c.$
Therefore, we see from $\left( \ref{guquyuling}\right) $ and Lemma \ref{sup
copy(1)} that the expressions
\begin{equation*}
\int\nolimits_{\Gamma }p\left\vert \left( g_{k,m}-u_{m}\right) ^{\prime
}\right\vert ^{2}\mathrm{d}x,\text{ }\int\nolimits_{\Gamma }w\left\vert
g_{k,m}-u_{m}\right\vert ^{2}\mathrm{d}x\text{ and }\int\nolimits_{\Gamma
}\left\vert q\right\vert \left\vert g_{k,m}-u_{m}\right\vert ^{2}\mathrm{d}x
\end{equation*}%
all tend to zero as $k\rightarrow \infty .$ Hence%
\begin{eqnarray*}
&&\left\Vert \varphi _{n}g_{k,m}-\varphi _{n}u_{m}\right\Vert _{\mathbf{t}%
_{q,n}} \\
&=&\int\nolimits_{\Gamma \backslash \Gamma _{n}}p\left\vert \left( \varphi
_{n}\left( g_{k,m}-u_{m}\right) \right) ^{\prime }\right\vert ^{2}+\left(
q+\left( c_{n}+1\right) w\right) \left\vert \varphi _{n}\left(
g_{k,m}-u_{m}\right) \right\vert ^{2}\mathrm{d}x \\
&\leq &C\left( \int\nolimits_{\Gamma }p\left\vert \left(
g_{k,m}-u_{m}\right) ^{\prime }\right\vert ^{2}\mathrm{d}x+\int\nolimits_{%
\Gamma }w\left\vert g_{k,m}-u_{m}\right\vert ^{2}\mathrm{d}x\right) \\
&&+\int\nolimits_{\Gamma }\left( \left\vert q\right\vert +\left(
c_{n}+1\right) w\right) \left\vert g_{k,m}-u_{m}\right\vert ^{2}\mathrm{d}%
x\rightarrow 0,\text{ as }k\rightarrow \infty
\end{eqnarray*}%
where $c_{n}>0$ are positive constants such that $\mathbf{t}_{q,n}\geq
-c_{n}.$ This proves $\left\{ f_{m,n}\right\} _{m=1}^{\infty }\subset $ dom$(%
\mathbf{t}_{q,n}).$

Next, we aim to prove $\left( \ref{tnf}\right) .$ In virtue of the
properties of $\left\{ u_{m}\right\} _{m=1}^{\infty },$
\begin{eqnarray*}
\mathbf{t}_{q}\left[ u_{m}\right] -\lambda &=&(\mathbf{H}_{\mathbf{t}%
_{q}}u_{m},u_{m})-\lambda \leq \left\Vert \mathbf{H}_{\mathbf{t}%
_{q}}u_{m}\right\Vert _{L_{w}^{2}(\Gamma )}\left\Vert u_{m}\right\Vert
_{L_{w}^{2}(\Gamma )}-\lambda \left\Vert u_{m}\right\Vert _{L_{w}^{2}(\Gamma
)} \\
&\leq &\left\Vert \mathbf{H}_{\mathbf{t}_{q}}u_{m}-\lambda u_{m}\right\Vert
_{L_{w}^{2}(\Gamma )},
\end{eqnarray*}%
and thus%
\begin{equation}
\mathbf{t}_{q}\left[ u_{m}\right] \leq \lambda +o(1).  \label{tqlamuda}
\end{equation}%
Therefore, it follows from Lemma \ref{sup copy(1)} that as $m\rightarrow
\infty ,$
\begin{eqnarray}
&&\int\nolimits_{\Gamma }p\left\vert u_{m}^{\prime }\right\vert ^{2}\mathrm{d%
}x\leq \lambda +\int\nolimits_{\Gamma }q_{-}\left\vert u_{m}\right\vert ^{2}%
\mathrm{d}x+o(1)  \label{py} \\
&\leq &\lambda +\frac{1}{2}\int\nolimits_{\Gamma }p\left\vert u_{m}^{\prime
}\right\vert ^{2}\mathrm{d}x+C_{\frac{1}{2}}\int\nolimits_{\Gamma
}w\left\vert u_{m}\right\vert ^{2}\mathrm{d}x+o(1)
\end{eqnarray}%
which implies that%
\begin{equation}
\int\nolimits_{\Gamma }p\left\vert u_{m}^{\prime }\right\vert ^{2}\mathrm{d}%
x\leq 2\left( \lambda +C_{\frac{1}{2}}+o(1)\right) .  \label{p bound}
\end{equation}

We also observe that for fixed $n,$%
\begin{equation}
\int\nolimits_{\Gamma _{n}}\left\vert u_{m}\right\vert ^{2}w\mathrm{d}%
x\rightarrow 0\text{ as }m\rightarrow \infty .  \label{uw}
\end{equation}

Let $e_{o}$ be an edge incident to $o.$ Then $u_{m}(o)=u_{m}(t)-%
\int_{o}^{t}u_{m}^{\prime }(s)\mathrm{d}s\ $for $t\in e_{o}.$ Multiply by $%
w(t)$ and integrate over $e_{o},$
\begin{equation*}
u_{m}(o)\int_{e_{o}}w(t)\mathrm{d}t=\int_{e_{o}}w(t)u_{m}(t)\mathrm{d}%
t-\int_{e_{o}}w(t)\int_{0}^{t}u_{m}^{\prime }(s)\mathrm{d}s\mathrm{d}t,
\end{equation*}%
then%
\begin{eqnarray*}
\left\vert u_{m}(o)\right\vert \int_{e_{o}}w(t)\mathrm{d}t &\leq &\left\vert
\int_{e_{o}}w(t)u_{m}(t)\mathrm{d}t\right\vert +\left\vert
\int_{e_{o}}w(t)\int_{0}^{t}u_{m}^{\prime }(s)\mathrm{d}s\mathrm{d}%
t\right\vert \\
&\leq &\left( \int_{e_{o}}w(t)\mathrm{d}t\int_{e_{o}}w(t)\left\vert
u_{m}(t)\right\vert ^{2}\mathrm{d}t\right) ^{\frac{1}{2}}+ \\
&&\left( \int_{e_{o}}w(t)\mathrm{d}t\int_{e_{o}}\frac{1}{p(t)}\mathrm{d}%
t\int_{e_{o}}p(t)\left\vert u_{m}^{\prime }(t)\right\vert ^{2}\mathrm{d}%
t\right) ^{\frac{1}{2}}.
\end{eqnarray*}%
Therefore, it follows from $\left( \ref{um1}\right) $ and $\left( \ref{p
bound}\right) $ that $\left\{ u_{m}(o)\right\} $ is bounded$.$ Moreover, for
any $x_{1}$, $x_{2}\in \Gamma _{n}$%
\begin{equation}
\left\vert u_{m}\left( x_{1}\right) -u_{m}\left( x_{2}\right) \right\vert
=\left\vert \int\nolimits_{x_{1}}^{x_{2}}u_{m}^{\prime }\left( t\right)
\mathrm{d}t\right\vert \leq \left( \int\nolimits_{x_{1}}^{x_{2}}\frac{1}{p}%
\mathrm{d}t\cdot \int\nolimits_{x_{1}}^{x_{2}}p\left\vert u_{m}^{\prime
}\right\vert ^{2}\mathrm{d}x\right) ^{\frac{1}{2}}.  \label{equicon}
\end{equation}%
This together with $\left( \ref{p bound}\right) $ and the boundedness of $%
\left\{ u_{m}(o)\right\} $ yields that $\left\{ u_{m}\right\} $ are
uniformly bounded and uniformly equicontinuous on $\Gamma _{n}.$ Then it
follows from Arzela-Ascoli theorem that there is a subsequence $\left\{
u_{m_{k}}\right\} ,$ which is convergent in $L_{w}^{2}(\Gamma _{n}).$ Since $%
u_{m_{k}}\overset{w}{\longrightarrow }0$ as $k\rightarrow \infty ,$ the
limit function must be zero, that is $\left\Vert u_{m_{k}}\right\Vert
_{L_{w}^{2}(\Gamma _{n})}\rightarrow 0$ as $k\rightarrow \infty .$ But then
the original sequence itself must have this property, since otherwise we
could get a contradiction by applying the arguments above to a suitable
subsequence. Hence $\left( \ref{uw}\right) $ is proved.

By use of the definition of $\varphi _{n},$ $\left( \ref{p bound}\right) $
and $\left( \ref{uw}\right) ,$ one has%
\begin{eqnarray}
&&\int\nolimits_{\Gamma \backslash \Gamma _{n}}p\left\vert \left( \varphi
_{n}u_{m}\right) ^{\prime }\right\vert ^{2}\mathrm{d}x  \label{ppp} \\
&\leq &\int\nolimits_{\Gamma \backslash \Gamma _{n}}p\left\vert \varphi
_{n}u_{m}^{\prime }\right\vert ^{2}\mathrm{d}x+2\int\nolimits_{\Gamma
\backslash \Gamma _{n}}p\left\vert \varphi _{n}u_{m}^{\prime }\right\vert
\left\vert \varphi _{n}^{\prime }u_{m}\right\vert \mathrm{d}%
x+\int\nolimits_{\Gamma \backslash \Gamma _{n}}p\left\vert \varphi
_{n}^{\prime }u_{m}\right\vert ^{2}\mathrm{d}x  \notag \\
&\leq &\int\nolimits_{\Gamma \backslash \Gamma _{n}}p\left\vert
u_{m}^{\prime }\right\vert ^{2}\mathrm{d}x+C\left( \int\nolimits_{\tilde{%
\Gamma}_{n}}p\left\vert u_{m}^{\prime }\right\vert ^{2}\mathrm{d}%
x\int\nolimits_{\tilde{\Gamma}_{n}}\left\vert u_{m}\right\vert ^{2}w\mathrm{d%
}x\right) ^{\frac{1}{2}}+C\int\nolimits_{\tilde{\Gamma}_{n}}\left\vert
u_{m}\right\vert ^{2}w\mathrm{d}x  \notag \\
&\leq &\int\nolimits_{\Gamma }p\left\vert u_{m}^{\prime }\right\vert ^{2}%
\mathrm{d}x+o(1)\text{ as }m\rightarrow \infty .  \notag
\end{eqnarray}%
From Lemma \ref{sup}, it follows that for every $\epsilon >0,$ there exists
a constant $C_{\epsilon }$ such that
\begin{eqnarray*}
\int\nolimits_{\tilde{\Gamma}_{n}}\left\vert q\right\vert \left\vert
u_{m}\right\vert ^{2}\mathrm{d}x &=&\sum\limits_{e\in \mathcal{E}\left(
\tilde{\Gamma}_{n}\right) }\int\nolimits_{e}\left\vert q\right\vert
\left\vert u_{m}\right\vert ^{2}\mathrm{d}x\leq \left( \sup_{e\in \mathcal{E}%
\left( \tilde{\Gamma}_{n}\right) }\int\nolimits_{e}\left\vert q\right\vert
\mathrm{d}t\right) \sum\limits_{e\in \mathcal{E}\left( \tilde{\Gamma}%
_{n}\right) }\left\Vert u_{m}^{2}\right\Vert _{L^{\infty }\left( e\right) }
\\
&\leq &\epsilon C_{n}\int\nolimits_{\tilde{\Gamma}_{n}}p\left\vert
u_{m}^{\prime }\right\vert ^{2}\mathrm{d}x+C_{\epsilon }C_{n}\int\nolimits_{%
\tilde{\Gamma}_{n}}w\left\vert u_{m}\right\vert ^{2}\mathrm{d}x
\end{eqnarray*}%
where $C_{n}:=\sup\limits_{e\in \mathcal{E}\left( \tilde{\Gamma}_{n}\right)
}\int\nolimits_{e}\left\vert q\right\vert \mathrm{d}t.$ This together with $%
\left( \ref{p bound}\right) $ and $\left( \ref{uw}\right) $ yields that
\begin{equation*}
\int\nolimits_{\tilde{\Gamma}_{n}}\left\vert q\right\vert \left\vert
u_{m}\right\vert ^{2}\mathrm{d}x=o(1)\text{ as }m\rightarrow \infty .
\end{equation*}%
Therefore,%
\begin{eqnarray}
&&\int\nolimits_{\Gamma \backslash \Gamma _{n}}q\left\vert \varphi
_{n}u_{m}\right\vert ^{2}\mathrm{d}x=\int\nolimits_{\tilde{\Gamma}%
_{n}\backslash \Gamma _{n}}q\left\vert \varphi _{n}u_{m}\right\vert ^{2}%
\mathrm{d}x+\int\nolimits_{\Gamma \backslash \tilde{\Gamma}_{n}}q\left\vert
u_{m}\right\vert ^{2}\mathrm{d}x  \label{qqq} \\
&=&\int\nolimits_{\tilde{\Gamma}_{n}}q\left\vert \varphi
_{n}u_{m}\right\vert ^{2}\mathrm{d}x+\int\nolimits_{\Gamma }q\left\vert
u_{m}\right\vert ^{2}\mathrm{d}x-\int\nolimits_{\tilde{\Gamma}%
_{n}}q\left\vert u_{m}\right\vert ^{2}\mathrm{d}x  \notag \\
&=&\int\nolimits_{\Gamma }q\left\vert u_{m}\right\vert ^{2}\mathrm{d}x+o(1)%
\text{ as }m\rightarrow \infty .  \notag
\end{eqnarray}%
This together with $\left( \ref{tqlamuda}\right) $ and $\left( \ref{ppp}%
\right) $ implies $\left( \ref{tnf}\right) .$

To prove $\left( \ref{fn}\right) ,$ we note from $\left( \ref{0}\right) $, $%
\left( \ref{1}\right) $, $\left( \ref{01}\right) $ and $\left( \ref{uw}%
\right) $ that
\begin{eqnarray*}
0 &\leq &\int\nolimits_{\Gamma }\left\vert u_{m}\right\vert ^{2}w\mathrm{d}%
x-\int\nolimits_{\Gamma \backslash \Gamma _{n}}\left\vert \varphi
_{n}u_{m}\right\vert ^{2}w\mathrm{d}x=\int\nolimits_{\Gamma }\left\vert
u_{m}\right\vert ^{2}w\mathrm{d}x-\int\nolimits_{\Gamma }\left\vert \varphi
_{n}u_{m}\right\vert ^{2}w\mathrm{d}x \\
&=&\int\nolimits_{\Gamma }\left\vert u_{m}\right\vert ^{2}w\mathrm{d}x-\left[
\int\nolimits_{\Gamma \backslash \tilde{\Gamma}_{n}}\left\vert \varphi
_{n}u_{m}\right\vert ^{2}w\mathrm{d}x+\int\nolimits_{\tilde{\Gamma}%
_{n}}\left\vert \varphi _{n}u_{m}\right\vert ^{2}w\mathrm{d}x\right] \\
&=&\int\nolimits_{\tilde{\Gamma}_{n}}\left\vert u_{m}\right\vert ^{2}w%
\mathrm{d}x-\int\nolimits_{\tilde{\Gamma}_{n}}\left\vert \varphi
_{n}u_{m}\right\vert ^{2}w\mathrm{d}x\rightarrow 0\text{ as }m\rightarrow
\infty .
\end{eqnarray*}%
Hence $\left( \ref{fn}\right) $ follows from $\left( \ref{um1}\right) $.

It follows from the definition of $l$ that for any given number $\epsilon
>0, $ there exists a positive number $N$ such that for all $n>N,$%
\begin{equation*}
\inf \sigma \left( \mathbf{H}_{\mathbf{t}_{q,n}}\right) \geq l-\epsilon ,%
\text{ i.e., }\inf\limits_{f\in \text{ dom}\left( \mathbf{t}_{q,n}\right) }%
\frac{\mathbf{t}_{q,n}\left[ f\right] }{\left\Vert f\right\Vert
_{L_{w}^{2}(\Gamma \backslash \Gamma _{n})}^{2}}\geq l-\epsilon .
\end{equation*}%
This yields that for every $f\in $ dom$\left( \mathbf{t}_{q,n}\right) ,$
\begin{equation*}
\mathbf{t}_{q,n}\left[ f\right] \geq \left( l-\epsilon \right) \left\Vert
f\right\Vert _{L_{w}^{2}(\Gamma \backslash \Gamma _{n})}^{2}.
\end{equation*}%
Combining this with $\left( \ref{tnf}\right) $ and $\left( \ref{fn}\right) $
we immediately get
\begin{equation*}
\left( l-\epsilon \right) \left[ 1+o(1)\right] \leq \mathbf{t}_{q,n}\left[
f_{m,n}\right] \leq \lambda +o(1)\text{ as }m\rightarrow \infty .
\end{equation*}%
Since $\epsilon $ is arbitrary, $\left( \ref{zuo}\right) $ is proved.

The reverse inequality follows from Lemma \ref{persson reverse}. In fact, we
can pick a sequence of functions $f_{n}$ $\in $ \textrm{{\ $\mathrm{dom}$}}$%
\left( \mathbf{t}_{q,n}\right) $ vanishing on $\Gamma _{n}$ and satisfying $%
\left\Vert f_{n}\right\Vert _{L_{w}^{2}(\Gamma )}^{2}=1$ such that%
\begin{equation*}
\left\vert \inf \sigma \left( \mathbf{H}_{\mathbf{t}_{q,n}}\right) -\mathbf{t%
}_{q,n}\left[ f_{n}\right] \right\vert \leq \frac{1}{n}
\end{equation*}%
for all $n\in
\mathbb{N}
.$ Then $\left\{ f_{n}\right\} $ converges weakly to zero. Moreover, by
construction
\begin{equation*}
\underset{n\rightarrow \infty }{\lim }\mathbf{t}_{q}\mathbf{[}f_{n}\mathbf{]}%
=\underset{n\rightarrow \infty }{\lim }\mathbf{t}_{q,n}[f_{n}]\mathbf{=}%
\underset{n\rightarrow \infty }{\lim }\inf \sigma \left( \mathbf{H}_{\mathbf{%
t}_{q,n}}\right) \mathbf{.}
\end{equation*}%
Now Lemma \ref{persson reverse} gives the desired inequality.
\end{proof}

\begin{remark}
With slightly modifications, Theorem \ref{persson} can be extended to the
case when we only assume $1/p,\ w\in L_{loc}^{1}\left( \Gamma \right) \ $and
$q\geq 0$ without the restriction $\inf\limits_{e\in \mathcal{E}\left(
\Gamma \right) }\left\vert e\right\vert =d_{\ast }>0.$
\end{remark}

\begin{remark}
Theorem \ref{persson} admits an obvious extension to the following general
case. Suppose Hypothesis \ref{hyp copy(1)} holds$.$ Define%
\begin{eqnarray}
\mathbf{s}_{q}\left[ f\right] &=&\int\nolimits_{\Gamma }p(x)\left\vert
f^{\prime }(x)\right\vert ^{2}+q\left\vert f(x)\right\vert ^{2}\mathrm{d}x,%
\text{ } \\
\mathbf{s}_{q,n}\left[ f\right] &=&\int\nolimits_{\Gamma \backslash \Gamma
_{n}}p(x)\left\vert f^{\prime }(x)\right\vert ^{2}+q\left\vert
f(x)\right\vert ^{2}\mathrm{d}x
\end{eqnarray}%
on the respective domains
\begin{eqnarray*}
\text{\emph{dom}}(\mathbf{s}_{q}) &=&\{f\in H^{1}(\Gamma ;p,w):\left\vert
\mathbf{q}\left[ f\right] \right\vert <\infty \},\text{ } \\
\text{\emph{dom}}(\mathbf{s}_{q,n}) &=&\{f\in H^{1}(\Gamma \backslash \Gamma
_{n};p,w):\left\vert \mathbf{q}_{n}\left[ f\right] \right\vert <\infty \}.
\end{eqnarray*}%
By a similar proof to that of Lemma \ref{s close}, one has $\mathbf{s}_{q}$
and $\mathbf{s}_{q,n}$ are all semibounded and closed; respectively, denote $%
\mathbf{H}_{\mathbf{s}_{q}}$ and $\mathbf{H}_{\mathbf{s}_{q,n}}$ the
corresponding self-adjoint operators. Then
\begin{equation*}
\inf \sigma _{ess}\left( \mathbf{H}_{\mathbf{s}_{q}}\right)
=\lim_{n\rightarrow \infty }\inf \sigma \left( \mathbf{H}_{\mathbf{s}%
_{q,n}}\right) .
\end{equation*}
\end{remark}

\noindent   Yihan Liu\\
\noindent   School of Mathematics \\
\noindent   Tianjin University \\
\noindent   Tianjin\\
\noindent   300354\\
\noindent   People's Republic of China\\
\noindent   yihanliu@tju.edu.cn\\

\noindent   Jun Yan\\
\noindent   School of Mathematics \\
\noindent   Tianjin University \\
\noindent   Tianjin\\
\noindent   300354\\
\noindent   People's Republic of China\\
\noindent   jun.yan@tju.edu.cn\\

\noindent   Jia Zhao\\
\noindent   Department of Mathematics\\
\noindent   School of Science\\
\noindent   Hebei University of Technology\\
\noindent   Tianjin\\
\noindent   300401\\
\noindent   People's Republic of China\\
\noindent   zhaojia@hebut.edu.cn\\

\end{document}